\newtheorem{thm}{Theorem}
\newtheorem{lem}{Lemma}
\newtheorem{property}{Property}
\theoremstyle{remark}
\newtheorem{rem}{Remark}
\newtheorem{example}{Example}
\newcommand{\R}{\mathbb{R}}
\newcommand{\Rp}{\mathbb{R}_+}
\title{Asymptotics for the maximum of a modulated random walk
with heavy-tailed increments}
\author{Serguei Foss and Stan Zachary}
\date{}
\begin{document}

\maketitle

\begin{abstract}\noindent
  We consider asymptotics for the maximum of a modulated random walk
  whose increments~$\xi_n^{X_n}$ are heavy-tailed.  Of particular
  interest is the case where the modulating process~$X$ is
  regenerative.  Here we study also the maximum of the recursion given
  by $W_0=0$ and, for $n\ge1$, $W_n=\max(0,W_{n-1}+\xi_n^{X_n})$.
\end{abstract}

\section{Introduction}

Let $S_n = \sum_1^n \xi_i$ be a sum of i.i.d.\ random variables (r.v.s)
with a negative finite mean ${\bf E} \xi_1 = -a <0$. The common
distribution of the random variables~$\xi_n$ is assumed to be right
heavy-tailed (i.e.  ${\bf E} \exp (\lambda \xi_1 ) = \infty$ for all
$\lambda > 0$).  Moreover, the second tail of this distribution is
assumed to be subexponential (see
Section 2 for definitions). Then the classical result (see, e.g.,
\cite{Ver}) states that, as $y\to\infty$,
\begin{equation} \label{11}
{\bf P} (\sup_n S_n > y) \sim \frac{1}{a} \int_y^{\infty} {\bf P} 
(\xi_1>t) dt .
\end{equation}

We consider here a more general random walk 
\begin{equation} \label{22}
S_n = \sum_1^n \xi_i^{X_i}
\end{equation}
whose increments $\xi^{X_n}_n$ are modulated by an independent
sequence $X=\{X_n\}_{n\ge1}$ (see Section~\ref{sec:results} for more
precise definitions and notation). We assume that $S_n\to -\infty $
a.s.\ and find conditions which are sufficient for the probability of
the ``rare'' event ${\bf P} (\sup_n S_n >y)$ to behave asymptotically
(as $y\to\infty$) similarly to \eqref{11}. The results obtained may be
applied to the study of complex stochastic models with modulated
input.

Particular cases, with $X$ a finite Markov chain, were
considered in \cite{Ar} and \cite{AS}. S.~Asmussen (\cite{SA2})
proposed an approach for getting the asymptotics for ${\bf P} (\sup_n
S_n >y)$ on the basis of a regenerative structure: if the maximum of
the partial sums over a typical cycle behaves asymptotically as the
end-to-end sum, and these asymptotics are subexponential, then the
result \eqref{11} stays the same. In \cite{SAM}, the authors assumed
that $X$ is countably-valued, a certain dependence between the
$X_n$ and the $\xi^x_n$ was allowed, and some homogeneity in $x$ of
the distributions of the random variables~$\xi^x_n$ was required. By
the use of matrix-analytic methods, they found the asymptotics for the
stationary distribution of a Markov chain with increments
$\xi_n^{X_n}$.

In \cite{BSS}, upper and lower bounds were found for the asymptotics
of ${\bf P} (R>y)$, as $y\to\infty$, where $R$ is the stationary
response time in a tandem queue.  Then, in \cite{HS}, the asymptotics
for the stationary waiting time $W$ in the second queue were studied.
Note that $W$ may be represented as the limit of a recursion
$$
W_n = \max (0, W_{n-1} + \xi_n^{X_n})
$$
where $X=\{X_n\}$ forms a Harris ergodic Markov chain.  In \cite{BF},
the exact asymptotics for ${\bf P} (R>y)$ were found.  The proof is
based on ideas similar to that of Lemma~\ref{lem2} of the present
paper.

Finally, nice overviews on the current state of large deviations
theory in the presence of heavy tails were given in \cite{Sig} and in
recent new books \cite{EKM} and \cite{SA}. 

We state our main results in Section~\ref{sec:results}.  We consider
in particular the case where the modulating process~$X$ is
regenerative, where we give also an instructive example and
counterexample.  The latter shows our conditions on the tail of the
distribution of the regeneration time to be best possible---in a sense
made clear there.  We study also the queueing theory recursion given
by $W_0=0$ and, for $n\ge1$, $W_n=\max(0,W_{n-1}+\xi_{n}^{X_n})$.

In Section~\ref{sec:properties} we collect together some useful known
results, most of which are required for our proofs.  These are given in
Section~\ref{sec:proofs}.  Perhaps the key result of the entire paper
is Lemma~\ref{lem2} of that section, which develops an idea found also
in \cite{BF}.

\section{The main results}
\label{sec:results}

Let $({\cal X},{\cal B})$ be a measurable space and $X= \{ X_n
\}_{n\ge1}$ an $\cal X$-valued discrete-time random process.
Let $P : {\cal X} \times {\cal B}_0 \to [0,1]$ (where ${\cal B}_0$ is
the Borel $\sigma$-algebra on $\R$) be a function such that
\begin{itemize}\itemsep 0pt
\item[(i)] for every $x\in {\cal X}$, $P(x,\cdot )$ is a probability
  measure;
\item[(ii)] for every $B\in {\cal B}_0$, $P(\cdot , B)$ is a measurable
  function.
\end{itemize}
For each $x\in {\cal X}$, let $F_x$ denote the distribution
function of $P(x,\cdot)$.  For each integer~$n\ge1$, introduce the
family of real-valued random variables
$\{\xi^x_n\}_{x\in{\cal{}X}}$.  Assume that these families are
mutually independent (in $n$), do not depend on the process $X$, and
that, for each $x\in {\cal X}$ and each $n$, $\xi^x_n$ has
distribution function~$F_x$.  
We define the \emph{random walk~$\{S_n\}_{n\ge0}$ modulated by the
  process~$X$} by $S_0=0$ and, for any $n=1,2,\ldots$,
$$
S_n = \sum_{i=1}^n \xi_i^{X_i}.
$$
Define also, for $n\ge1$, 
$$
M_n = \max_{0\leq i\leq n} S_i,
$$
and let
$$
M = \sup_{n\geq 0} S_n.
$$
Further, for each $y>0$, define
$$
\mu (y) = \min \{ n\geq 1: \ S_n > y \}.
$$
Note that $\mu (y) = \infty $ if and only if $M \leq y$.

We are interested the asymptotics of the upper-tail distribution of
$M$ under conditions which guarantee that the random walk~$S_n$
behaves sufficiently regularly and has a strictly negative drift, and
where additionally the distribution functions~$F_x$ have, in some
appropriate sense, heavy positive tails.  More precisely, we wish to
make statements, under such conditions, about the behaviour, for any
$B\in\cal B$ and as $y\to\infty$, of ${\bf P}(M>y,\,X_{\mu(y)}\in B)$.

Motivated by queueing theory applications, we are also interested in
the behaviour of the process~$\{W_n\}_{n\ge0}$ defined recursively by
$W_0=0$ and, for $n\ge1$,
\begin{equation}\label{wrec}
  W_n = \max (0, W_{n-1} + \xi_{n}^{X_n}).
\end{equation}

We assume throughout that $P$ is such that each probability
measure~$P(x,\cdot)$ (i.e.\ each distribution $F_x$) has a finite
mean.  We further assume throughout that there exist a distribution
function $F$ on $\Rp$ with finite mean, and a measurable function
$c:{\cal X}\to\Rp$ such that
\begin{gather}
  \overline{F}_x(y)  \sim c(x) \overline{F}(y) \quad \text{ as
    $y\to\infty$, \quad for all $ x\in{\cal X}$}, \label{tail}\\
  \sup_x \sup_{y\ge0}\frac{\overline{F}_x(y)}{\overline{F}(y)} = L,
  \qquad\text{ for some $L<\infty$}.
  \label{comp} 
\end{gather}
Here, for any distribution function~$H$ on $\R$, $\overline{H}$
denotes the tail distribution given by $\overline{H}(y)=1-H(y)$.

The following two conditions on the pair $(X,\,P)$ will be satisfied
in all our results, either by hypothesis or as a consequence of more
fundamental modelling assumptions.  (We show below that these
conditions may arise naturally in the case where the process $X$ is
regenerative, but they may also arise in other contexts.)
\begin{itemize}
\item[(C1)] There exists some probability distribution~$\pi$ on
  $({\cal X, B})$ such that, for some positive integer $d$,
  \begin{equation}
    \label{cvge}
    \frac{{\bf P} (X_{n}\in \cdot ) + 
      \ldots + {\bf P} (X_{n+d-1}\in \cdot )}{d}
    \to \pi (\cdot ),
    \qquad\text{as $n\to\infty$,}
  \end{equation}
  in total variation norm.  Here define also
  \begin{equation}
    \label{Cdef}
    C(B) = \int_B c(x)\pi(dx), \qquad B \in {\cal B},
  \end{equation}
  and put $C=C({\cal X})$.
\item[(C2)] The pair $(X,\,P)$ is such that 
  \begin{equation}
    \label{slln}
    \lim_{n\to\infty}\frac{S_n}{n} = -a, \quad\text{ a.s., for some $a>0$}.
  \end{equation}
\end{itemize}

We need to recall the following definitions.  For any distribution
function~$H$ on $\R$, the integrated, or second-tail, distribution
$H^s$ is given by
\begin{displaymath}
  \overline{H}^s(y) = \min \left( 1,
    \int_y^{\infty} \overline{H}(t) dt
  \right).
\end{displaymath}
A distribution function $H$ on $\Rp$ is \emph{long-tailed} if
$\overline{H}(y)>0$ for all $y$ and, for any fixed $z>0$,
$$
\frac{\overline{H}(y+z)}{\overline{H}(y)} \to 1 \qquad\text{as
  $y\to\infty$.}
$$
A distribution function $H$ on $\Rp$ is
\emph{subexponential} if $\overline{H}(y)>0$ for all $y$ and
$$
\frac{\overline{H}^{*2}(y)}{\overline{H}(y)} \to 2 \qquad\text{as
  $y\to\infty$.}
$$
(where $H^{*2}$ is the convolution of $H$ with itself).
It is well known that any subexponential distribution is long-tailed.

We now have Theorems~\ref{thlb} and \ref{thsu} below.

\begin{thm} \label{thlb} 
  Assume that the conditions~(C1) and (C2) hold and that $F^s$ is
  long-tailed.  Then
  \begin{equation} \label{lowerth}
    \liminf_{y\to\infty}
    \frac{{\bf P} (M>y, X_{\mu (y)}\in B )}{\overline{F}^s(y)} \geq
    \frac{C(B)}{a}, \qquad\text{for all $B\in {\cal B}$}.
  \end{equation}
\end{thm}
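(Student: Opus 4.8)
The plan is to prove \eqref{lowerth} by a single-big-jump argument: the least unlikely way for the walk to reach a high level~$y$ is for one increment $\xi_m^{X_m}$ to be large — of size roughly $y+am$, so as to overcome the drift accumulated over the first $m-1$ steps — while all other increments behave typically. Fix $B\in{\cal B}$ and $\epsilon>0$. Writing ${\bf P}(M>y,\,X_{\mu(y)}\in B)=\sum_{m\ge1}{\bf P}(\mu(y)=m,\,X_m\in B)$, I would bound the $m$-th term from below by restricting to the event $R_m\cap\{X_m\in B\}\cap\{\xi_m^{X_m}>y+(a+\epsilon)m\}$, where $R_m=\{\max_{j<m}S_j\le y\}\cap\{S_{m-1}\ge-(a+\epsilon)m\}$ is a function of $X$ and of $\xi_1^{X_1},\dots,\xi_{m-1}^{X_{m-1}}$, hence conditionally independent of $\xi_m^{X_m}$ given~$X$. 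On this event $\mu(y)=m$, so, conditioning on~$X$ and using that $\overline F$ is non-increasing,
\begin{equation*}
  {\bf P}(\mu(y)=m,\,X_m\in B)\ \ge\ {\bf E}\bigl[{\bf 1}(X_m\in B)\,{\bf P}(R_m\mid X)\,\overline F_{X_m}(y+(a+\epsilon)m)\bigr].
\end{equation*}
Splitting ${\bf P}(R_m\mid X)=1-{\bf P}(R_m^c\mid X)$ separates a main term and an error term. The error term, summed over $m\ge m_1$, is at most $L\,\delta(m_1,y)\sum_{m\ge1}\overline F(y+(a+\epsilon)m)\le\frac{L}{a+\epsilon}\,\delta(m_1,y)\,\overline F^s(y)$ by \eqref{comp} and monotonicity of $\overline F$, where $\delta(m_1,y)={\bf P}(M>y)+{\bf P}(\exists\,m\ge m_1\colon S_{m-1}<-(a+\epsilon)m)$; this vanishes as first $y\to\infty$ (since $S_n\to-\infty$ a.s.\ by~(C2), so $M<\infty$ a.s.) and then $m_1\to\infty$ (again by~(C2)).

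It then remains to show that the main term $\sum_{m=m_1}^{N(y)}{\bf E}[{\bf 1}(X_m\in B)\,\overline F_{X_m}(y+(a+\epsilon)m)]$ is, for a suitable cutoff $N(y)$, asymptotically at least $\frac{C(B)}{a+\epsilon}\,\overline F^s(y)$; letting $m_1\to\infty$ and $\epsilon\to0$ then gives~\eqref{lowerth}. Here two facts must be reconciled: \eqref{tail} holds only pointwise in~$x$, and (C1) gives only Cesàro-in-$d$ convergence in total variation of the law of~$X_m$ to~$\pi$. I would treat the first by an Egorov argument — for $\eta>0$ choose a set ${\cal X}_\eta$ with $\pi({\cal X}_\eta^c)<\eta$ on which $\overline F_x(z)/\overline F(z)\to c(x)$ uniformly (and $c$ finite), so that $\overline F_{X_m}(z)\ge h_N(X_m)\,\overline F(z)$ on $\{X_m\in{\cal X}_\eta\}$ whenever $z=y+(a+\epsilon)m\ge y\ge z_0(\eta)$, with $h_N(x)={\bf 1}(x\in B\cap{\cal X}_\eta)\bigl(\min(c(x),N)-\eta\bigr)^+$ — and the second by grouping $m$ into blocks of length~$d$, using monotonicity of $\overline F$ within a block and \eqref{cvge} (applied to the bounded function $h_N$) so that, for $m$ beyond a fixed threshold, each block contributes at least $d\,\overline F(\text{block end})\,(\int h_N\,d\pi-\eta)$. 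Summing blocks and comparing the resulting Riemann-type sum of $\overline F$ with the integral gives a lower bound, up to a factor $1-o(1)$, of $(\int h_N\,d\pi-\eta)\,\frac{1}{a+\epsilon}\bigl[\overline F^s(y+O(1))-\overline F^s\bigl(y+(a+\epsilon)N(y)+O(1)\bigr)\bigr]$; then letting $N\to\infty$ (so $\int h_N\,d\pi\uparrow\int_{B\cap{\cal X}_\eta}(c-\eta)^+\,d\pi$) and $\eta\to0$ (this tends to $C(B)$, since $c$ is $\pi$-integrable, $\int c\,d\pi=C<\infty$, and the integral is absolutely continuous) finishes — provided $N(y)$ is chosen with $\overline F^s(y+O(1))\sim\overline F^s(y)$ and $\overline F^s\bigl(y+(a+\epsilon)N(y)\bigr)=o(\overline F^s(y))$.

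The main obstacle is precisely this last point. Since $F^s$ is only assumed long-tailed (not, say, regularly varying), a big jump at a bounded time may contribute a quantity negligible against $\overline F^s(y)$, so the cutoff must satisfy $N(y)\to\infty$ while still $\overline F^s\bigl(y+(a+\epsilon)N(y)\bigr)=o(\overline F^s(y))$. That such $N(y)$ exists is a standard property of long-tailed functions, of the type collected in Section~\ref{sec:properties}: writing $\overline F^s(y+1)/\overline F^s(y)=1-\varepsilon(y)$ with $\varepsilon(y)\ge0$, long-tailedness gives $\varepsilon(y)\to0$ while $\overline F^s(y)\to0$ forces $\sum_y\varepsilon(y)=\infty$, so one can choose $N(y)\to\infty$ with $\sum_{j=0}^{N(y)-1}\varepsilon(y+j)\to\infty$, whence $\overline F^s(y+N(y))/\overline F^s(y)\le\exp\bigl(-\sum_{j=0}^{N(y)-1}\varepsilon(y+j)\bigr)\to0$. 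Beyond this, the remaining work is bookkeeping: carrying the Egorov truncation, the truncation of~$c$ at level~$N$, and the block-ergodic averaging uniformly through the sum over~$m$ and the wide range of levels $z=y+(a+\epsilon)m$, and taking the limits in $y$, $N(y)$, $m_1$, $N$, $\eta$ and $\epsilon$ in the correct order.
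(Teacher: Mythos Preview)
Your argument is correct and follows the same single-big-jump decomposition as the paper: split according to $\mu(y)=m$, restrict to a typical-drift event for $S_0,\dots,S_{m-1}$, and separate a main term from an error term controlled by~(C2). The difference lies in handling the main term $\sum_m{\bf E}[{\bf 1}(X_m\in B)\,\overline F_{X_m}(y+(a+\epsilon)m)]$. The paper sums to infinity and applies~\eqref{lt2} together with~\eqref{tail} to obtain, for each fixed~$x$, $\sum_n\overline F_x(y+R+n(a+\varepsilon))\sim\frac{c(x)}{a+\varepsilon}\,\overline F^s(y)$; the uniform bound~\eqref{comp} then licenses the Bounded Convergence Theorem over $\int_B\pi(dx)$, yielding~\eqref{bct} directly. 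This bypasses both the Egorov set and the truncation of~$c$ at level~$N$ (indeed $c\le L$ everywhere by~\eqref{comp}, so that truncation is vacuous once $N\ge L$). In particular, your ``main obstacle''---choosing $N(y)\to\infty$ with $\overline F^s(y+(a+\epsilon)N(y))=o(\overline F^s(y))$---is avoidable: summing blocks to infinity, the Riemann comparison already gives a lower bound $\frac{1}{a+\epsilon}\,\overline F^s(y+O(1))\sim\frac{1}{a+\epsilon}\,\overline F^s(y)$ by long-tailedness of $F^s$ alone, with no upper cutoff needed. Your block-of-$d$ averaging does treat general~$d$ explicitly (the paper reduces to $d=1$ and leaves the extension to the reader), but the combination of~\eqref{lt2} and dominated convergence is shorter and sidesteps the measurability subtlety of upgrading Egorov from a sequence $y_n\to\infty$ to the continuum.
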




\begin{thm} \label{thsu}
  Assume that the conditions~(C1) and (C2) hold, that $F^s$ is
  subexponential, and that there exists a distribution function $G$
  with negative mean
  \begin{equation}
    \label{su}
    m(G) \equiv \int_{-\infty}^\infty t\,dG(t) < 0
  \end{equation}
  such that $\overline{F}_x(y) \le \overline{G}(y)$ for all $x$ and
  $y$.  Then
\begin{equation}
  \label{prop}
  \lim_{y\to\infty}
  \frac{{\bf P} (M>y, X_{\mu (y)}\in B )}{\overline{F}^s(y)} =
  \frac{C(B)}{a}, \qquad\text{for all $B\in {\cal B}$}.
\end{equation}
\end{thm}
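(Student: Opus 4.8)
The plan is to reduce \eqref{prop} to a single unrestricted upper bound and then obtain that bound by a ``one big jump'' decomposition of $\{M>y\}$.

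\emph{Reduction.} Since a subexponential distribution is long-tailed, $F^s$ is long-tailed, so Theorem~\ref{thlb} applies; it yields the lower bound in \eqref{prop} for every $B\in\mathcal B$, and, with $B=\mathcal X$, also $\liminf_{y\to\infty}{\bf P}(M>y)/\overline F^s(y)\ge C/a$. Suppose in addition we can show
\begin{equation}\label{eq:FZ-ub}
  \limsup_{y\to\infty}\frac{{\bf P}(M>y)}{\overline F^s(y)}\le\frac Ca .
\end{equation}
Writing $p_D(y)={\bf P}(M>y,\,X_{\mu(y)}\in D)/\overline F^s(y)$ for $D\in\mathcal B$, it then follows that $p_{\mathcal X}(y)$ has a genuine limit, namely $C/a=C(B)/a+C(B^c)/a$, where $B^c=\mathcal X\setminus B$ and $C(\cdot)$ is a finite measure. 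Since $p_B(y)+p_{B^c}(y)=p_{\mathcal X}(y)$ and $\lim_y p_{\mathcal X}(y)$ exists, $\limsup_y p_B(y)=\lim_y p_{\mathcal X}(y)-\liminf_y p_{B^c}(y)\le C/a-C(B^c)/a=C(B)/a$, the inequality being Theorem~\ref{thlb} for $B^c$; together with the matching lower bound this gives \eqref{prop}. So it suffices to prove \eqref{eq:FZ-ub}.

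\emph{Decomposition.} Conditioning on $X$ together with $(\xi_i^{X_i})_{i<n}$ and using that, given this, $\xi_n^{X_n}$ has distribution $F_{X_n}$ and is independent of $S_{n-1}$, one obtains the exact identity (the events $\{\mu(y)=n\}$ being disjoint)
\begin{equation}\label{eq:FZ-dec}
  {\bf P}(M>y)=\sum_{n\ge1}{\bf E}\bigl[\mathbf 1\{M_{n-1}\le y\}\,\overline F_{X_n}(y-S_{n-1})\bigr].
\end{equation}
Fix a small $\varepsilon\in(0,1)$ and split each summand according to whether the walk has run down at least at its typical rate, $S_{n-1}\le-(a-\varepsilon)(n-1)$, or not.

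\emph{Typical part.} On $\{S_{n-1}\le-(a-\varepsilon)(n-1)\}$ we have $\overline F_{X_n}(y-S_{n-1})\le\overline F_{X_n}(y+(a-\varepsilon)(n-1))$, so, dropping the remaining indicators, the typical contribution is at most $\sum_{n\ge1}{\bf E}[\overline F_{X_n}(y+(a-\varepsilon)(n-1))]$, whose argument tends to infinity with $y$ for every $n$. Using the domination $\overline F_x\le L\overline F$ of \eqref{comp}, the tail equivalence $\overline F_x(t)\sim c(x)\overline F(t)$ of \eqref{tail}, the total-variation convergence \eqref{cvge} of the block-averaged laws of $X_n$ to $\pi$, the elementary bound $\sum_{k\ge0}\overline F(y+(a-\varepsilon)k)\le\overline F(y)+\frac{1}{a-\varepsilon}\overline F^s(y)$, and $\overline F(y)=o(\overline F^s(y))$ (which follows from the long-tailedness of $F^s$), a routine but careful computation bounds the typical contribution by $(1+o(1))\frac{C}{a-\varepsilon}\overline F^s(y)$. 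Letting $\varepsilon\downarrow0$ after $y\to\infty$ gives exactly the bound $\frac Ca\overline F^s(y)$ demanded by \eqref{eq:FZ-ub}.

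\emph{Atypical part, and the main obstacle.} It remains to show that the complementary contribution,
\[
  \sum_{n\ge1}{\bf E}\bigl[\mathbf 1\{M_{n-1}\le y,\ S_{n-1}>-(a-\varepsilon)(n-1)\}\,\overline F_{X_n}(y-S_{n-1})\bigr],
\]
which by \eqref{comp} is at most $L$ times the same sum with $\overline F$ in place of $\overline F_{X_n}$, is $o(\overline F^s(y))$ for each fixed $\varepsilon$. This is the crux. Condition~(C2) alone does not suffice: one must control, uniformly in $n$, the chance that the walk sits appreciably above its typical downward trajectory, and this is where the domination~\eqref{su} enters. Via the monotone coupling $S_{n-1}\le\tilde S_{n-1}$ by an i.i.d.\ random walk $\tilde S$ of increment distribution $G$ (which, since $S_n/n\to-a$ forces $-a\ge m(G)$, also makes the $S$-walk drift down no slower than the $G$-walk), the upper tail of $S_{n-1}$ is controlled by that of $\tilde S_{n-1}$, while the subexponentiality of $F^s$ governs the ``single big jump versus several contributions'' dichotomy that renders the atypical event negligible. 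This is precisely the estimate furnished by Lemma~\ref{lem2}; extracting the required $o(\overline F^s(y))$ bound and dovetailing it with the typical estimate is the part of the argument that demands real work. Combining the two parts yields \eqref{eq:FZ-ub}, and hence, by the reduction above, the theorem.
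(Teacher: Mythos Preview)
Your reduction to the single upper bound~\eqref{eq:FZ-ub} is correct and is exactly Lemma~\ref{lem1}. Your ``typical'' estimate is also fine. The gap is in the atypical part.

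First, a sign slip that undermines the coupling idea: from $\xi_n^{X_n}\le\psi_n$ one gets $S_n\le\tilde S_n$, hence $-a=\lim S_n/n\le\lim\tilde S_n/n=m(G)$, i.e.\ $a\ge g:=-m(G)$, not $-a\ge m(G)$. This matters: for small $\varepsilon$ the line $-(a-\varepsilon)(n-1)$ lies \emph{below} $-g(n-1)$, so $\{\tilde S_{n-1}>-(a-\varepsilon)(n-1)\}$ is the \emph{typical} event for the $G$-walk and carries no decay. The indicator $\{M_{n-1}\le y\}$ also goes the wrong way under the coupling (since $M_{n-1}\le\tilde M_{n-1}$, the event enlarges), so you cannot reduce your atypical sum to Property~\ref{p:vc1} for the i.i.d.\ walk either.

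Second, Lemma~\ref{lem2} does not furnish the bound you cite. It is not an estimate on your atypical sum; it is an entirely different decomposition of $\{M>y\}$. The paper first uses the quantile coupling $\xi_n^x=F_x^{-1}(\alpha_n)$, $\psi_n=G^{-1}(\alpha_n)$ to obtain $M\le M^\psi$ pointwise, so that ${\bf P}(M>y)={\bf P}(M>y,\,M^\psi>y)$ exactly. Veraverbeke's theorem for the \emph{dominating} i.i.d.\ walk then rewrites $\{M^\psi>y\}$ (up to $o(\overline F^s(y))$) as the union of the big-jump events $\{\psi_n>y+ng\}$, and one splits on the \emph{two-sided} events
\[
D'_n=\bigl\{S_j\le R-j(a-\varepsilon)\ \text{for all }j<n;\ \ S_{n+i}-S_n\le R\ \text{for all }i\ge1\bigr\}.
\]
The ``future'' half of $D'_n$, invisible to a first-passage decomposition, is precisely what lets one conclude that on $D'_n$ a moderate value of $\xi_n^{X_n}$ forces $M\le y$; the complement $\overline{D}'_n$ is handled because $\psi_n$ is independent of $D'_n$. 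That is the mechanism of Lemma~\ref{lem2}. Your outline replaces this step with a citation that does not match it, leaving the hard estimate unproved.
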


\begin{rem} 
  In an obvious sense, the best candidate for the distribution function
  $G$ in Theorem~\ref{thsu} is the right-continuous version of
  $\tilde{G}(y)=\sup_xF_x(y)$.
\end{rem}


As mentioned above, the conditions~(C1) and (C2) are frequently
satisfied in applications.  Perhaps the most common instance occurs in
the case where $X$ is a \emph{regenerative process}.  Here there
exists an increasing integer-valued random sequence $0 \leq T_0 < T_1
< T_2 < \ldots $ a.s., such that, if $\tau_0=T_0$, $\tau_n =
T_n-T_{n-1}$, $n\ge1$, then
\begin{align*}
  Z_0 & = \{\tau_0 ; X_1, \ldots , X_{T_0} \},\\
  Z_n & = \{\tau_n ; X_{T_{n-1}+1}, \ldots , X_{T_n} \}, \qquad n\ge1,
\end{align*}
are mutually independent for $n\geq 0$ and identically distributed
for $n\geq 1$.  Here $\tau_0$ is the length of the $0$th cycle,
$\tau \equiv \tau_1$ the length of the first cycle, etc; in
particular, if $\tau_0=0$, then the $0$th cycle is empty.

Assume also, for $X$ regenerative as above, that ${\bf E}\tau$ is
finite.  Then the condition~(C1) is automatically satisfied with
\begin{displaymath}
  d=\text{GCD}\{n:\ {\bf P} (\tau = n)>0 \}
\end{displaymath}
and the probability measure $\pi$ given by
\begin{equation}
  \label{regpi}
  \pi (B) = {\bf E}
  \left(
    \sum_{i=T_0+1}^{T_1} {\bf I} (X_i \in B )
  \right),
  \qquad B\in {\cal B},
\end{equation}
where ${\bf I}$ is the indicator function.  Suppose also that the
modulated random walk~$S_n$ is constructed as above, that
\begin{equation}
  \label{absrdrift}
  \int_{\cal X}{\bf E}(|\xi^x_1|)\pi(dx) < \infty,
\end{equation}
and that
\begin{equation}
  \label{rdrift}
  \int_{\cal X}{\bf E}\xi^x_1\,\pi(dx) = -a, \quad\text{ for some $a>0$}.
\end{equation}
Then, it is an elementary exercise, using the Strong Law of Large
Numbers, to show that the condition~(C2) is satisfied with $a$ as
given by \eqref{rdrift}.

\begin{example}~\label{ex1}
  For a particular example of such a random walk modulated by a
  regenerative process, consider a stable tandem queue $GI/GI/1\to
  GI/1$ which is defined by three mutually independent sequences
  $\{t_n\}$, $\{ \sigma_n^{(1)} \}$, and $\{\sigma_n^{(2)} \}$ of
  i.i.d.\ random variables with ${\bf E}t_1 > \max \{ {\bf E}
  \sigma_1^{(1)}, {\bf E} \sigma^{(2)}_1 \}$.  Here the $t_n$ are the
  inter-arrival times at the first queue, while the $\sigma^{(1)}_n$
  and the $\sigma^{(2)}_n$ are the service times at the first and
  second queues respectively.  Let $\{X_n\}$ be the sequence of
  inter-departure times from the first queue.  Then this sequence is
  regenerative (the regeneration indices corresponding to those
  customers who arrive to find the first queue empty), and the
  distribution of $X_n$ converges in the total variation norm to a
  stationary distribution with mean ${\bf E} t_1$.  Consider the
  sequence $\xi_n^{X_n} = \sigma_n^{(2)} - X_n$.  Under natural
  conditions (see Theorems~\ref{thexp}--\ref{thw} below), we can show
  that the tail distribution of the supremum of a modulated random
  walk with increments $\xi_n^{X_n}$ asymptotically coincides with
  that of the stationary waiting time in the second queue.
\end{example}

In the case of a regenerative process as above we obtain, in
Theorems~\ref{thexp} and \ref{thup} below, the conclusion~\eqref{prop}
of Theorem~\ref{thsu} under weaker conditions than that given by
\eqref{su}.  In each case the cost is that of suitable conditions on
the distributions of the cycle times~$\tau_0$ and $\tau$.
Both the theorems are adapted to typical queueing theory applications.

\begin{thm} \label{thexp} 
  Assume that $X$ is regenerative with ${\bf E}\tau<\infty$, that
  the conditions~\eqref{absrdrift} and \eqref{rdrift} hold (with $\pi$
  as given by \eqref{regpi}), and that $F^s$ is subexponential.
  Assume also that
  \begin{equation} \label{tautail} 
    {\bf P} (b\tau_0 > y)  = o(\overline{F}^s(y)),
    \quad
    {\bf P} (b\tau > y)  = o(\overline{F}(y)),
    \quad\text{as $y\to\infty$},
  \end{equation}
  for all $b>0$.
  Then the conclusion~\eqref{prop} of Theorem~\ref{thsu} again follows.
\end{thm}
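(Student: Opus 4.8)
The plan is to deduce Theorem~\ref{thexp} from Theorem~\ref{thsu} by a truncation argument on the modulated increments. Since Theorem~\ref{thsu} already gives the conclusion~\eqref{prop} whenever there is a single dominating distribution function $G$ with negative mean satisfying $\overline{F}_x(y)\le\overline{G}(y)$, the only obstacle to applying it directly is that here we assume no such global upper bound---only the cycle-length tail conditions~\eqref{tautail}. The idea is therefore to truncate: for a fixed (large) constant $K>0$, replace each increment $\xi_n^{X_n}$ by $\xi_n^{X_n}\wedge K$, producing a new modulated random walk $S_n^{(K)}$ with supremum $M^{(K)}$. For fixed $K$ the truncated increment distributions still satisfy \eqref{tail} and \eqref{comp} (with the same $c(\cdot)$, since truncation does not affect the right tail, and with $L$ unchanged), and they are all dominated by $\overline{G}$ where $G$ is the law of $(\text{generic increment})\wedge K$; provided $K$ is chosen large enough that $m(G)<0$ (possible because the mean of $\xi^x_1$ averaged against $\pi$ is $-a<0$ and truncation from above only decreases means, together with \eqref{absrdrift} for uniform integrability), Theorem~\ref{thsu} applies to the truncated walk and yields
\begin{equation}\label{eq:trunclimit}
  \lim_{y\to\infty}\frac{{\bf P}(M^{(K)}>y,\,X_{\mu^{(K)}(y)}\in B)}{\overline{F}^s(y)} = \frac{C(B)}{a},
\end{equation}
where $\mu^{(K)}$ and $C(B)$ are defined relative to the truncated walk; note $C(B)$ is unchanged since $c(\cdot)$ and $\pi$ are unchanged.

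Since $S_n^{(K)}\le S_n$ pointwise, we have $M^{(K)}\le M$, so \eqref{eq:trunclimit} immediately gives the lower bound matching Theorem~\ref{thlb} (indeed Theorem~\ref{thlb} already supplies $\liminf \ge C(B)/a$ directly). The real work is the matching upper bound: I must show that the event $\{M>y\}$ is, up to $o(\overline{F}^s(y))$, not larger than $\{M^{(K)}>y\}$, i.e. that the contribution to $\{M>y\}$ coming from increments exceeding $K$ (beyond a single big jump already accounted for) is negligible. This is where the cycle structure and the hypotheses~\eqref{tautail} enter. Decompose the trajectory of $S$ into regenerative cycles; on a typical cycle the sum of increments behaves like $-a\tau$ with fluctuations controlled by the cycle maximum, and the condition ${\bf P}(b\tau>y)=o(\overline{F}(y))$ ensures that the within-cycle maximum of the partial sums has a tail that is $o(\overline{F}(y))$ away from what a single big jump contributes, while ${\bf P}(b\tau_0>y)=o(\overline{F}^s(y))$ handles the initial (possibly atypical) cycle. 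The heuristic is the ``principle of a single big jump'': $M>y$ happens essentially because one increment $\xi_i^{X_i}$ is of order $y$ (minus the accumulated negative drift), and the probability of two or more moderately large increments, or of an atypically long cycle conspiring to push the walk up, is of smaller order. Making this rigorous is exactly the content of Lemma~\ref{lem2} (the ``key result'' flagged in the introduction), which I would invoke to reduce the tail of $M$ over an aggregated block of cycles to the tail of a single-big-jump term, and then let $K\to\infty$ after $y\to\infty$ to remove the truncation.

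Concretely, the steps are: (1) fix $K$, define the truncated walk, verify \eqref{tail}, \eqref{comp}, (C1) and (C2) persist and that for $K$ large the dominating $G$ has negative mean; (2) apply Theorem~\ref{thsu} to get \eqref{eq:trunclimit}; (3) show $\limsup_{y\to\infty}\overline{F}^s(y)^{-1}\,{\bf P}(M>y,\,M^{(K)}\le y)\to 0$ as $K\to\infty$, using the regenerative decomposition, the single-big-jump estimate of Lemma~\ref{lem2}, and the tail bounds~\eqref{tautail} to discard the contributions of long cycles and of multiple large jumps; (4) combine (2) and (3) with the trivial inclusion to sandwich ${\bf P}(M>y,\,X_{\mu(y)}\in B)$ between $C(B)/a\cdot\overline{F}^s(y)(1+o(1))$ from above and the same from below (the latter from Theorem~\ref{thlb}), noting that on $\{M^{(K)}>y\}$ one has $\mu(y)=\mu^{(K)}(y)$ and $X$ at that index agrees, so the localization in $B$ transfers. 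The main obstacle is step~(3): one must carefully estimate, cycle by cycle, the probability that the untruncated maximum exceeds $y$ while the truncated one does not, controlling both the number of cycles before $\mu(y)$ (which is $O(y)$ in probability, hence the need for the full strength of \eqref{tautail} for all $b>0$) and the overshoot contributed by a single jump above $K$; the subexponentiality of $F^s$ and long-tailedness of $F$ are what allow the resulting sum over cycles to be handled by the convolution-closure properties recalled in Section~\ref{sec:properties}.
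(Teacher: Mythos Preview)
Your central claim that truncation at $K$ ``does not affect the right tail'' is false and breaks the argument at step~(2). If $F_x^{(K)}$ is the law of $\xi_1^x\wedge K$ then $\overline{F_x^{(K)}}(y)=0$ for every $y\ge K$, so in \eqref{tail} the truncated family satisfies $\overline{F_x^{(K)}}(y)\sim 0\cdot\overline{F}(y)$; the function $c(\cdot)$ is replaced by $c^{(K)}\equiv 0$ and hence $C^{(K)}(B)=0$ for every $B$. Applying Theorem~\ref{thsu} to the truncated walk therefore yields only ${\bf P}(M^{(K)}>y)=o(\overline{F}^s(y))$, not your \eqref{eq:trunclimit}. (The drift also changes to some $a_K\ge a$, but that is secondary.) Consequently your sandwich collapses: the truncated walk carries no information about $C(B)/a$, and step~(4) cannot recover it. Incidentally, the auxiliary claim that $\mu(y)=\mu^{(K)}(y)$ on $\{M^{(K)}>y\}$ is also unjustified; from $S^{(K)}_n\le S_n$ one gets only $\mu(y)\le\mu^{(K)}(y)$.

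The paper proceeds by \emph{splitting} rather than truncating, so that the heavy tail is retained in an i.i.d.\ component. Via the quantile coupling $\xi_n^x=F_x^{-1}(\alpha_n)$, $\zeta_n=G^{-1}(\alpha_n)$ (with $G$ chosen so that $\overline{F}_x\le\overline{G}\le L\overline{F}$), one writes
\[
\xi_n^x \;\le\; \psi_n + \eta_n^x,\qquad
\psi_n={\bf I}(\zeta_n>y_0)(\zeta_n-K),\quad
\eta_n^x={\bf I}(\zeta_n\le y_0)\xi_n^x+{\bf I}(\zeta_n>y_0)K,
\]
with $y_0,K$ chosen so that ${\bf E}\psi_1<0$ and $\int{\bf E}\eta_1^x\,\pi(dx)<0$. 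The sequence $\{\psi_n\}$ is i.i.d., independent of $X$, and carries the full heavy tail (${\bf P}(\psi_1>y)\le L\overline{F}(y)$), so it meets the hypotheses of Lemma~\ref{lem2}; the $\eta_n^x$ are bounded by $b=\max(y_0,K)$, and the regenerative structure together with \eqref{tautail} for this $b$ (Lemma~\ref{lem3}) gives ${\bf P}(M^\eta>y)=o(\overline{F}^s(y))$. Since $M\le M^\psi+M^\eta$ with $M^\psi,M^\eta$ independent, Property~\ref{p:conv} yields ${\bf P}(M>y)={\bf P}(M>y,\,M^\psi>y)+o(\overline{F}^s(y))$, which is exactly the input~\eqref{mmpsi} to Lemma~\ref{lem2}. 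The moral is that the single-big-jump mechanism must be isolated in an i.i.d.\ carrier of the tail, not removed by truncation.
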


\begin{rem}
  As already discussed, the assumptions of Theorem~\ref{thexp} ensure
  that the earlier conditions~(C1) and (C2) are satisfied.
\end{rem}

\begin{rem} 
  It will follow from the proof of Theorem \ref{thexp} that it is
  enough to assume that the condition \eqref{tautail} holds for a
  certain sufficiently large $b$.
\end{rem}

\begin{rem}
  The condition \eqref{tautail} holds for all $b>0$ if there exists
  some $\lambda>0$ such that both ${\bf E} \exp (\lambda \tau )$ and
  ${\bf E} \exp (\lambda \tau_0)$ are finite.
\end{rem}

Under the conditions of Theorem~\ref{thup} we relax the requirement
that the condition~\eqref{tautail} hold for all $b>0$.  This
requirement may fail to be satisfied in some examples where the
conditions of Theorem~\ref{thup} are, however, quite natural---see,
e.g., Example~\ref{ex1}.


\begin{thm} \label{thup} 
  Assume that $X$ is regenerative with ${\bf E}\tau<\infty$, that
  the conditions~\eqref{absrdrift} and \eqref{rdrift} hold, and that
  $F^s$ is subexponential.  Assume also that there exists a family
  $\{G_x\}_{x\in\cal X}$ of distribution functions on $\Rp$ such that
  $G_x(y)$ is measurable in $x$ for all $y$,
  \begin{equation} \label{FG} 
    \overline{F}_x(y) \le \overline{G}_x(y) \quad
    \text{ for all $x$ and for all $y$}, 
  \end{equation}
  and, for each $x$, $G_x$ may be represented as a distribution
  function of a difference of two independent r.v.s
  \begin{equation} \label{dif}
    G_x(y) = {\bf P} (\zeta - b^x \leq y),
  \end{equation}
  where the distribution of $\zeta$ does not depend on $x$,
  \begin{equation}\label{zbd}
    \limsup_{y\to\infty}\frac{{\bf P}(\zeta>y)}{\overline{F}(y)} < \infty,
  \end{equation}
  $b^x$ is non-negative a.s., and
  \begin{equation} \label{bb}
    {\bf E}_{\pi} b^X \equiv
    \int {\bf E} b^x\,\pi (dx) >  {\bf E} \zeta.
  \end{equation}
  Assume further that the condition \eqref{tautail} holds for some
  $b>{\bf E} \zeta$.  Then the conclusion~\eqref{prop} of
  Theorem~\ref{thsu} follows once more.
\end{thm}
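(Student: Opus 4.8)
The plan is to establish the two matching bounds separately. The lower bound costs nothing: as already remarked, the hypotheses imply that conditions~(C1) and (C2) hold (with $\pi$ as in \eqref{regpi} and $a$ as in \eqref{rdrift}), and a subexponential $F^s$ is in particular long-tailed, so Theorem~\ref{thlb} applies and yields $\liminf_{y\to\infty}{\bf P}(M>y,X_{\mu(y)}\in B)/\overline{F}^s(y)\ge C(B)/a$ for every $B\in{\cal B}$. It therefore suffices to prove the reverse inequality for the limsup, and, since $C(B)+C(B^c)=C$, it is enough to treat the single case $B={\cal X}$, i.e.\ to show $\limsup_{y\to\infty}{\bf P}(M>y)/\overline{F}^s(y)\le C/a$; applying the lower bound to $B$ and to $B^c$ then forces equality in \eqref{prop} for all $B$.

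For this upper bound I would exploit the regenerative structure. Embedding the walk at the regeneration epochs, the increments $\eta_n=S_{T_n}-S_{T_{n-1}}$, $n\ge1$, are i.i.d.\ with strictly negative mean by \eqref{rdrift}, while within the $n$th cycle the walk climbs above its starting value by at most $\Delta_n=\max_{0\le j\le\tau_n}\sum_{i=T_{n-1}+1}^{T_{n-1}+j}\xi_i^{X_i}\ge0$, so that $M\le\sup_{n\ge0}\bigl(S_{T_n}+\Delta_{n+1}\bigr)$, with $\{\Delta_n\}_{n\ge1}$ i.i.d.\ and $\Delta_{n+1}$ independent of $S_{T_n}$. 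Next I would invoke the domination \eqref{FG}--\eqref{dif}: on an enlarged space construct i.i.d.\ copies $\zeta_i$ of $\zeta$ independent of $X$ and, conditionally on $X$, independent variables $b_i$ with $b_i$ distributed as $b^{X_i}$, coupled so that $\xi_i^{X_i}\le\zeta_i-b_i$ a.s. Then the dominating cycle increments $\sum_{i=T_{n-1}+1}^{T_n}(\zeta_i-b_i)$ have strictly negative mean, by \eqref{bb} together with the fact that $\pi$ is the expected occupation measure of a cycle, while $\Delta_n\le\sum_{i=T_{n-1}+1}^{T_{n-1}+\tau_n}\zeta_i$; using \eqref{zbd}, the second half of \eqref{tautail} for the chosen $b>{\bf E}\zeta$, and the standard tail estimate for a randomly stopped sum, one checks that the within-cycle climbs satisfy ${\bf P}(\Delta_1>y)=O(\overline{F}(y))$ and do not affect the leading asymptotics.

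With this set-up I would apply Lemma~\ref{lem2}, the single-big-jump lemma, to the modulated walk~$S_n$. It shows that, up to an error $o(\overline{F}^s(y))$, ${\bf P}(M>y)$ is accounted for by the trajectories on which exactly one increment $\xi_i^{X_i}$ is of order~$y$, occurring at a time~$i$ at which $S_{i-1}$ has not yet drifted far below~$0$; summing this contribution over~$i$ and using \eqref{tail}, \eqref{comp} and the total-variation convergence~(C1) of the occupation measures of~$X$ gives the bound $(C/a+o(1))\overline{F}^s(y)$. The complementary ``no big jump'' event --- on which the walk reaches~$y$ using only moderate increments --- is controlled by replacing each $\xi_i^{X_i}$ by the dominating $\zeta_i-b_i$: over a single cycle this dominating walk climbs by at most $\approx\tau\,{\bf E}\zeta$, so reaching level~$y$ that way forces either an atypically long cycle, killed by \eqref{tautail} because $b>{\bf E}\zeta$, or an atypically long excursion of the negative-drift cycle walk, killed by a Kingman-type exponential bound; the $0$th cycle is handled separately using the first half of \eqref{tautail}. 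Hence this probability is $o(\overline{F}^s(y))$, and combining the two contributions gives $\limsup_{y\to\infty}{\bf P}(M>y)/\overline{F}^s(y)\le C/a$.

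I expect the main obstacle to be the ``no big jump'' estimate: bounding, uniformly in~$y$, the probability that the modulated walk --- which drifts negatively only \emph{on average}, not pointwise --- climbs to level~$y$ without a single large increment. This is precisely the point at which the pointwise domination available in Theorem~\ref{thsu} is unavailable and the averaged condition \eqref{bb} must be made to do the work, via the passage to the i.i.d.\ cycle walk, the tail control on $\Delta_1$ and on $\zeta$, and a careful choice of the truncation level separating ``large'' from ``moderate'' increments; threading the cycle lengths through \eqref{tautail} with the slack $b>{\bf E}\zeta$ is the delicate part of the bookkeeping. This is essentially the content that Lemma~\ref{lem2} is designed to encapsulate.
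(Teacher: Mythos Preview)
Your high-level strategy is sound --- lower bound via Theorem~\ref{thlb}, reduction to \eqref{prop2} via the argument of Lemma~\ref{lem1}, then Lemma~\ref{lem2} for the upper bound --- but the application of Lemma~\ref{lem2} has a genuine gap. That lemma does not itself produce a single-big-jump decomposition for the modulated walk; it \emph{consumes} one, in the form of the hypothesis~\eqref{mmpsi}: you must first supply an i.i.d.\ sequence $\{\psi_n\}$ with ${\bf E}\psi_1<0$, ${\bf P}(\psi_1>y)\le L_1\overline{F}(y)$, and $\psi_n$ independent of $D'_n$, and then prove that ${\bf P}(M>y,\,M^\psi\le y)=o(\overline{F}^s(y))$. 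You never name $\psi_n$. The dominating increments $\zeta_i-b_i$ you build are not i.i.d.\ (the $b_i$ depend on $X_i$), $\zeta_i$ alone need not have negative mean, and cycle-level sums, while i.i.d., are not step-indexed and do not enjoy the required independence from $D'_n$. Your ``Kingman-type exponential bound'' for the cycle walk is also suspect: the cycle increments inherit heavy tails from the $\zeta_i$, so no exponential moment exists, and any truncation would have to be threaded through the whole argument rather than invoked at the end.

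The paper's missing ingredient is a one-line split: choose a \emph{constant} $b$ with ${\bf E}\zeta<b<{\bf E}_\pi b^X$ (possible by~\eqref{bb}, and compatible with~\eqref{tautail} by hypothesis), and write $\zeta_n-b^x_n=(\zeta_n-b)+(b-b^x_n)$, setting $\psi_n=\zeta_n-b$ and $\eta^x_n=b-b^x_n$. Now $\{\psi_n\}$ is genuinely i.i.d., has negative mean, satisfies~\eqref{l1bdd} by~\eqref{zbd}, and is independent of $D'_n$; meanwhile $\eta^x_n\le b$ is uniformly bounded above with negative $\pi$-mean, so Lemma~\ref{lem3} (which uses the regenerative structure together with Property~\ref{p:vc2}, a Veraverbeke-type bound, not an exponential inequality) gives ${\bf P}(M^\eta>y)=o(\overline{F}^s(y))$. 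Since $M\le M^\psi+M^\eta$ with $M^\psi$ and $M^\eta$ independent, Property~\ref{p:conv} yields~\eqref{mmpsi}, and Lemma~\ref{lem2} then delivers~\eqref{prop} (this packaging is the content of Lemma~\ref{lem4}). Your cycle-embedding heuristics are aiming at the same target as Lemma~\ref{lem3}, but without isolating the constant-shifted $\psi_n$ you cannot feed the output into Lemma~\ref{lem2}.
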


\begin{rem}
  The assumptions on $X$ in Theorem~\ref{thup} again ensure that the
  earlier condition~(C1) is satisfied, while it follows from
  \eqref{FG}, \eqref{dif} and \eqref{bb} that the
  condition~\eqref{rdrift}, and so the condition~(C2), is satisfied.
\end{rem}

\begin{rem}
  It is easy to see that the asymptotics for ${\bf P} (M>y)$ may be
  quite different from \eqref{prop} if the assumption \eqref{tautail}
  fails.  For instance, assume that the remaining conditions of
  Theorem~\ref{thup} hold with ${\cal X} = {\cal R}$, $F_x=G_x$ for
  all~$x$, $\zeta \geq 1$ a.s., and $b^x\equiv0$ for all $x\neq 0$.
  Assume also that $X_0 = 0$, $T_0=0$, $\tau \equiv \tau_1 = \min \{
  n>0\colon X_n = 0 \}$.  The condition~\eqref{bb} here becomes ${\bf
    E} b^0 > {\bf E} \zeta {\bf E} \tau $.  Then
  $$
  M \geq \max (\tau_1 -1, \tau_1 - b_{\tau_1}^0 + \tau_2 -1, \ldots )
  \equiv M^{*}.
  $$
  If the second tail for ${\bf P} (\tau >t)$ is subexponential (and so
  also long-tailed), then,
  by \eqref{11},
  $$
  {\bf P} (M^{*}>y) \sim
  \frac{1}{{\bf E} (b^0-\tau)} \int_y^{\infty}
  {\bf P} (\tau >t) dt .
  $$ 
\end{rem}

Finally, again in the case where $X$ is regenerative, we consider the
process~$\{W_n\}_{n\ge0}$ defined earlier by \eqref{wrec}.  In the
special case where $F_x=F$ for all $x\in\cal X$ (so that
$\{\xi_n\}$ is an i.i.d.  sequence), it is well known that the
distributions of $W_n$ and $S_n$ coincide.  However, this is not
generally the case in the present setting.

\begin{thm} \label{thw}
  Assume that $X$ is regenerative with ${\bf E}\tau<\infty$.
  Then, under the conditions of either Theorem \ref{thsu},
  Theorem~\ref{thexp} or Theorem~\ref{thup},
  \begin{equation} \label{w}
    \lim_{y\to\infty}
    \frac{1}{\overline{F}^s(y)}
    \limsup_{n\to\infty}
    {\bf P} (W_n>y) =
    \lim_{y\to\infty}
    \frac{1}{\overline{F}^s(y)}
    \liminf_{n\to\infty}
    {\bf P} (W_n>y) = 
    \frac{C}{a}.
  \end{equation}
\end{thm}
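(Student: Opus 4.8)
The plan is to sandwich the quantities $\limsup_{n\to\infty}{\bf P}(W_n>y)$ and $\liminf_{n\to\infty}{\bf P}(W_n>y)$ between expressions whose normalised limits (as $y\to\infty$) are both $C/a$, using the already-established asymptotics for $M$ from whichever of Theorems~\ref{thsu}, \ref{thexp}, \ref{thup} is in force, together with the regenerative structure of $X$. The starting point is the standard representation of the recursion \eqref{wrec}: for each fixed $n$,
\begin{displaymath}
  W_n = \max_{0\le k\le n}\Bigl(S_n - S_k\Bigr)
      = \max_{0\le k\le n}\sum_{i=k+1}^{n}\xi_i^{X_i}.
\end{displaymath}
Reading the increments backwards from time~$n$, $W_n$ is the running maximum of a random walk driven by the time-reversed modulating sequence $(X_n,X_{n-1},\dots,X_1)$. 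Under regeneration of $X$ with ${\bf E}\tau<\infty$, the distribution of this reversed block converges (in total variation, along the appropriate subsequences governed by the period~$d$) to the stationary two-sided regenerative process; hence one expects $\lim_{n}{\bf P}(W_n>y)$ to exist off a lattice issue and to equal ${\bf P}(M^{\mathrm{stat}}>y)$, where $M^{\mathrm{stat}}=\sup_{n\ge0}\tilde S_n$ is the supremum of the modulated walk built from the \emph{stationary} version of $X$.

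The key steps, in order, are as follows. First I would show the upper bound $\limsup_{n\to\infty}{\bf P}(W_n>y)\le {\bf P}(M>y)+o(\overline F^s(y))$ uniformly in a suitable sense. By the time-reversal identity, $\{W_n>y\}$ is the event that the reversed walk exceeds~$y$ before time~$n$; coupling the reversed modulating sequence to its stationary counterpart (using (C1)/regeneration and ${\bf E}\tau<\infty$, exactly as in the construction preceding Theorem~\ref{thexp}) replaces $X$ by a process for which the supremum over all time is exactly an $M$-type quantity, and the coupling error is a total-variation term that is $o(1)$ in~$n$ and does not affect the $y\to\infty$ asymptotics after division by $\overline F^s(y)$. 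One must be slightly careful that the stationary modulating process here is the \emph{two-sided} regenerative extension and that its associated walk has drift $-a$ with $a$ as in \eqref{rdrift}; then Theorems~\ref{thsu}/\ref{thexp}/\ref{thup} (whose hypotheses are inherited by the stationary construction, since the tail conditions \eqref{tail}, \eqref{comp}, \eqref{su} or \eqref{FG}--\eqref{bb}, and the cycle-tail condition \eqref{tautail}, are all properties of the cycle law, not of the particular delay) give the upper asymptotic $C/a$. Second, for the lower bound $\liminf_{n\to\infty}{\bf P}(W_n>y)\ge \bigl(C/a\bigr)\overline F^s(y)(1+o(1))$: fix $N$ and restrict attention, for $n\ge N$, to the last $N$ increments read backwards, i.e.\ $W_n\ge \max_{0\le k\le N}(S_n-S_{n-k})$; as $n\to\infty$ the law of $(X_n,\dots,X_{n-N+1})$ converges to the stationary one, so $\liminf_n{\bf P}(W_n>y)\ge{\bf P}(\tilde M_N>y)$ where $\tilde M_N$ is the $N$-step maximum of the stationary modulated walk; then let $N\to\infty$ and apply the lower bound of Theorem~\ref{thlb} (valid under the hypotheses of any of the three theorems) to the stationary walk to get the constant $C/a$ after normalisation. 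Combining the two bounds yields \eqref{w}.

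The main obstacle I anticipate is the interchange of the two limits, $n\to\infty$ and $y\to\infty$: the statement \eqref{w} puts $\limsup_{n}$ (resp.\ $\liminf_n$) \emph{inside} the $y$-limit, so I must produce estimates that are uniform in~$n$ for large~$y$, rather than merely pointwise in~$y$ for large~$n$. Concretely, the coupling-error term from replacing $X$ by its stationary version must be controlled by something of the form $\varepsilon_n + \delta(y)$ with $\varepsilon_n\to0$ and $\delta(y)=o(\overline F^s(y))$, so that after dividing by $\overline F^s(y)$ and first letting $n\to\infty$ the $\varepsilon_n$ drops out cleanly; ensuring the error genuinely decouples in this additive way — and is not, say, of order $\varepsilon_n/\overline F^s(y)$, which would blow up — is where the regenerative structure and the boundedness condition \eqref{comp} (or \eqref{FG}--\eqref{zbd}) must be used carefully, presumably via a truncation of the first partial cycle together with the cycle-tail bound \eqref{tautail}. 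A secondary technical point is the lattice/periodicity bookkeeping: if $d>1$ the individual marginals ${\bf P}(X_n\in\cdot)$ need not converge, only their Cesàro average over a period does, so the cleanest route is to work throughout with the Cesàro-averaged probabilities ${\bf P}(W_n>y)$ or to pass to the $d$ subsequences separately and observe that each has the same limit $C/a$, which forces the $\limsup$ and $\liminf$ over all~$n$ to coincide.
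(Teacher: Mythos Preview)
Your proposal is in the right spirit and largely parallels the paper's argument, but you take an unnecessary detour that creates the very difficulty you then worry about. You correctly identify the key identity $W_n=\max_{0\le k\le n}(S_n-S_k)$ and the time-reversal idea, and you even note that one ``expects'' $\lim_n{\bf P}(W_n>y)$ to exist and equal ${\bf P}(M^{\mathrm{stat}}>y)$. The paper simply \emph{proves} this expectation: the cycle-level objects $Y_n=\{\tau_n;W_{T_{n-1}+1},\dots,W_{T_n}\}$ form a Harris ergodic Markov chain, so $W_n$ converges in total variation; passing to the two-sided stationary extension $\tilde X$ and using stationarity gives $\lim_n{\bf P}(W_n>y)={\bf P}(M^->y)$ for every fixed $y$. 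Once the inner limit exists, $\limsup_n$ and $\liminf_n$ collapse, and \eqref{w} is a direct application of Theorem~\ref{thsu}, \ref{thexp} or \ref{thup} (with $B={\cal X}$) to the time-reversed stationary process. There is no interchange-of-limits issue and no need for the uniform coupling-error decomposition $\varepsilon_n+\delta(y)$ that you anticipate as the main obstacle; your sandwich argument would work but is strictly harder than what is required.

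One genuine point you skate over: you assert that the hypotheses of Theorems~\ref{thexp}/\ref{thup} are inherited by the stationary construction because they are ``properties of the cycle law, not of the particular delay''. This is true for the condition on $\tau$ in \eqref{tautail}, but the delay $\tau_0^-$ of the time-reversed stationary process has the \emph{equilibrium} distribution ${\bf P}(\tau_0^-\ge n)=({\bf E}\tau)^{-1}\sum_{k\ge n+1}{\bf P}(\tau\ge k)$, and one must check that ${\bf P}(b\tau_0^->y)=o(\overline{F}^s(y))$. The paper does this explicitly, observing (via an argument analogous to Property~\ref{p:com}) that ${\bf P}(b\tau>y)=o(\overline{F}(y))$ implies ${\bf P}(b\tau_0^->y)=o(\overline{F}^s(y))$; this is why the remark after the theorem says the original $\tau_0$ condition is not needed here. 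You should flag and carry out this verification rather than dismiss it.
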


\begin{rem}
  In fact, under the conditions of Theorem~\ref{thexp} or \ref{thup},
  the condition on $\tau_0$ (in \eqref{tautail}) is not required for
  Theorem~\ref{thw}.
\end{rem}

\section{Useful Properties}
\label{sec:properties}

We recall some known properties of distributions.  For any
distribution function~$G$ on $\R$ let 
\begin{displaymath}
      m(G) \equiv \int_{-\infty}^\infty t\,dG(t) 
\end{displaymath}
denote its mean.  Further, we make the convention that, for
distribution functions $G$ and $H$, we write
$\overline{H}(y)\sim0\cdot\overline{G}(y)$ if
$\overline{H}(y)=o(\overline{G}(y))$ as $y\to\infty$.

\begin{property}\label{p:ext}
  Suppose that distribution functions $G$ and $H$ are such that $m(G)$
  is finite, $m(H)=-h$ for some $h>0$, and
  $\overline{H}(y)=o(\overline{G}(y))$ as $y\to\infty$.  Then, for any
  $\varepsilon>0$ we can find a distribution function~$H_\varepsilon$
  such that $\overline{H}(y)\le\overline{H}_\varepsilon(y)$ for all
  $y$, $m(H_\varepsilon)\le-h/2$ and
  $\overline{H}_\varepsilon(y)=\varepsilon\overline{G}(y)$ for all
  sufficiently large $y$.
\end{property}

\begin{property}\label{p:com}
  Suppose that distribution functions $G$ and $H$ are such that $G^s$
  exists, and that $\overline{H}(y)\sim c\overline{G}(y)$ as
  $y\to\infty$ for some $c\ge0$.  Then $H^s$ exists and
  $\overline{H}^s(y)\sim c\overline{G}^s(y)$ as $y\to\infty$.
\end{property}

\begin{property}
  Suppose that a distribution function $G$ is such that its second
  tail distribution $G^s$ is long-tailed. Then
  \begin{equation}
    \label{lt1}
    \overline{G}(y) = o(\overline{G}^s(y)) \quad
    \mbox{as} \quad y\to\infty.
  \end{equation}
  Further, for any $g>0$ and any sequence $\{\alpha_n\}$ such that
  $\alpha_n\to\alpha$ as $n\to\infty$,
  \begin{equation}
    \label{lt2}
    \lim_{y\to\infty}\frac{1}{\overline{G}^s(y)}
    \sum_{n=k}^\infty \alpha_n \overline{G}(y+l+ng) = \frac{\alpha}{g}
    \qquad\text{for all $k$ and for all $l$}.
  \end{equation}
\end{property}

\begin{property}
  Suppose that distribution functions $G$ and $H$ are such that
  $\overline{H}(y)\sim c\overline{G}(y)$ as $y\to\infty$ for some
  $c>0$.  Then if $G$ is subexponential, $H$ is subexponential, while
  if $G^s$ subexponential, $H^s$ is subexponential and
  $\overline{H}^s(y)\sim c\overline{G}^s(y)$ as $y\to\infty$. 
\end{property}

\begin{property}\label{p:conv}
  Let $\xi_1,\xi_2,\ldots ,\xi_n$ be $n$ mutually independent r.v.s
  and $G$ a subexponential distribution such that, for
  $i=1,2,\ldots,n$, ${\bf P} (\xi_i >y) \sim c_i\overline{G}(y)$ as
  $y\to \infty$, where $c_1,c_2,\ldots ,c_n\geq 0$.  Then
  \begin{displaymath}
    {\bf P} (\xi_1+\xi_2+\ldots +\xi_n>y) \sim (c_1+c_2+\ldots+c_n)
    \overline{G}(y) \qquad\text{as $y\to\infty$.}
  \end{displaymath}
\end{property}

\begin{property}
  Let $\{\xi_n\}_{n\ge1}$ be an i.i.d.\ sequence of nonnegative random
  variables with subexponential distribution $G$.  For any $n$, put
  $$
  \alpha_n = \sup_{y\geq 0} \frac{{\bf P} (\xi_1+\ldots +
    \xi_n>y)}{{\bf P} (\xi_1>y)} \equiv \sup_{y\geq 0}
  \frac{\overline{G}^{*n}(y)}{\overline{G}(y)}.
  $$
  Then, for any $u>0$ one can choose $k>0$ such that $\alpha_n \leq
  k(1+u )^n$ for all $n$.
\end{property}

\begin{property}[Veraverbeke's Theorem]
  Let $\{\xi_n\}_{n\ge1}$ be an i.i.d.\ sequence of random variables
  with distribution function $G$ and a negative mean $-g = {\bf E}
  \xi_1 <0$.  Suppose that the second-tail distribution $G^s$ is
  subexponential. Set $S'_n=\sum_{i=1}^n \xi_i$, and
  $M'=\max(0,\sup_nS'_n)$.  Then, as $y\to\infty$,
  \begin{equation} \label{Verav}
    {\bf P}(M' > y) \sim {\bf P} \left( \bigcup_{n\ge1} \{ \xi_n > y+ng \}
    \right) \sim \sum_{n\ge1} {\bf P} (\xi_n > y+ng) \sim \frac{1}{g}
    \overline{G}^s(y).
  \end{equation}
\end{property}

Thus, under the conditions of Veraverbeke's Theorem, the supremum~$M'$
is large if and only if one of summands is large.  The following three
properties are all corollaries of Veraverbeke's Theorem.  In
particular Property~\ref{p:vc2} follows easily on using also
Property~\ref{p:ext} above.

\begin{property} \label{p:vc1}
  Under the conditions of Veraverbeke's Theorem above, for any
  $\tilde{g}\in (0,g)$,
  $$
  \sum_{n=1}^{\infty} 
  {\bf P} (M'_n \leq y, S'_n \in (-n\tilde{g}, y], S'_{n+1}>y)
  = o(\overline{G}^s(y))
  \qquad\text{as $y\to\infty$},
  $$
  where, for each $n$,  $M'_n=\max(0,\max_{1\le{}i\le{}n}S'_i)$.
\end{property} 

\begin{property} \label{p:vc2}
  Let $\{\xi_n\}_{n\ge1}$ be an i.i.d.\ sequence of random variables
  with distribution function $H$ and negative mean~${\bf E}\xi_1<0$.
  Suppose that $\overline{H}(y)=o(\overline{G}(y))$, as $y\to\infty$,
  for some distribution function~$G$ whose second-tail distribution
  $G^s$ is subexponential.  Set $S'_n=\sum_{i=1}^n \xi_i$ and
  $M'=\max(0,\sup_n S'_n)$.  Then
  \begin{displaymath}
    {\bf P}(M' > y) = o(\overline{G}^s(y)) \qquad\text{as $y\to\infty$.}
  \end{displaymath}
\end{property}

\begin{property}
  Let $\{\xi_n\}_{n\ge1}$ be an i.i.d.\ sequence of random variables
  with distribution function $H$ and negative mean~${\bf E}\xi_1<0$.
  Suppose that $\overline{H}(y)\sim c(\overline{G}(y))$, as
  $y\to\infty$, for some $c\geq 0$ and some distribution function~$G$
  whose second-tail distribution $G^s$ is subexponential.  Let $\tau'$
  be an independent positive integer-valued random variable.  Then
  $$
  {\bf P} \left(\max_{1\leq n\leq \tau'} \sum_{i=1}^n\xi_i >
    y\right) = o(\overline{G}^s(y)) \qquad\text{as $y\to\infty$.}
  $$
\end{property}

\section{Proofs}
\label{sec:proofs}

\begin{proof}[Proof of Theorem~\ref{thlb}]
  We prove the theorem in the case where the constant~$d$ of the
  condition~(C1) is equal to $1$.  The modification required for the
  general case is quite obvious.  By the Strong Law of Large Numbers,
  for any $\varepsilon \in (0,a)$, we can choose $R\equiv
  R(\varepsilon )$ such that
  \begin{displaymath}
    {\bf P}
    \left(S_n \in
      \left[-R - n(a+\varepsilon), R - n(a-\varepsilon)\right] 
      \quad
      \text{for all $n=0,1,2,\ldots$}
    \right)
    \geq 1-\varepsilon.
  \end{displaymath}
  Put
  $$
  D_n =
  \left\{
    S_i \in
    \left[-R - i(a+\varepsilon), R - i(a-\varepsilon)\right] 
    \quad
    \text{for all $i=0,1,2, \ldots ,n$}
  \right\}
  $$
  and $D\equiv D_{\infty}$. Since $D_{\infty}\subseteq D_n$ for
  all $n$, ${\bf P} (D_n)\geq 1-\varepsilon $.
  
  Now, for all sufficiently large $y>0$,
  \begin{align*}
    & {\bf P} (M > y, X_{\mu (y)} \in B)\nonumber\\
    & = \sum_{n=0}^\infty {\bf P}(M_n \le y, S_{n+1} > y, X_{n+1}
    \in B)\nonumber\\
    & \ge \sum_{n=0}^\infty {\bf P}(D_n, S_{n+1} > y, X_{n+1}
    \in B)\nonumber\\ 
    & \ge \sum_{n=0}^\infty \int_B {\bf P}(D_n, X_{n+1}\in dx)
    \overline{F}_x(y + R + n(a+\varepsilon))\nonumber\\ 
    & \ge \sum_{n=0}^\infty
    \left[\int_B {\bf P}(X_{n+1}\in dx)
      \overline{F}_x(y + R + n(a+\varepsilon))
      - {\bf P}(\overline{D})L\overline{F}(y + R + n(a+\varepsilon))
    \right]\nonumber\\
    & \ge \sum_{n=0}^\infty
    \left[\int_B \pi(dx)
      \overline{F}_x(y + R + n(a+\varepsilon))
      - \left({\bf P}(\overline{D})+\delta_{n+1}\right)
      L\overline{F}(y + R + n(a+\varepsilon))
    \right],\label{pause}
  \end{align*}
  where, for each $n$, $\delta_n=\sup_B|{\bf P}(X_n\in B) - \pi(B)|$
  is the distance in total variation between the distributions of
  $X_n$ and $\pi$.  The condition~(C1) implies that $\delta_n\to0$ as
  $n\to\infty$.  Since $F^s$ is long-tailed, it now follows from
  \eqref{tail}, \eqref{comp} and \eqref{lt2} that, for all $x$,
  \begin{equation*}\label{ly1}
    \lim_{y\to\infty}
    \frac{1}{\overline{F}^s(y)}
    \sum_{n=0}^\infty
    \overline{F}_x(y + R + n(a+\varepsilon))
    = \frac{c(x)}{a+\varepsilon}
  \end{equation*}
  and that
  \begin{align}
    \limsup_{y\to\infty}
    \frac{1}{\overline{F}^s(y)}
    \sum_{n=0}^\infty
    \overline{F}_x(y + R + n(a+\varepsilon))
    & \le
    \lim_{y\to\infty}
    \frac{L}{\overline{F}^s(y)}
    \sum_{n=0}^\infty
    \overline{F}(y + R + n(a+\varepsilon))\nonumber\\
    & = \frac{L}{a+\varepsilon} \label{lbct}
  \end{align}
  (where the convergence to the limit above is of course independent
  of $x$).  Hence, by the Bounded Convergence Theorem,
  \begin{equation}
    \label{bct}
    \lim_{y\to\infty}
    \frac{1}{\overline{F}^s(y)}
    \sum_{n=0}^\infty
    \int_B \pi(dx)
    \overline{F}_x(y + R + n(a+\varepsilon))
    = \frac{C(B)}{a+\varepsilon}.
  \end{equation}
  Also, again from \eqref{lt2},
  \begin{displaymath}
    \lim_{y\to\infty}
    \frac{1}{\overline{F}^s(y)}
    \sum_{n=0}^\infty
    \delta_{n+1}\overline{F}(y + R + n(a+\varepsilon))
    = 0.
  \end{displaymath}
  Thus, again using \eqref{lbct},
  \begin{displaymath}
    \liminf_{y\to\infty}
    \frac{1}{\overline{F}^s(y)}
    {\bf P} (M > y, X_{\mu (y)} \in B)
    \ge \frac{C(B)-L{\bf P}(\overline{D})}{a+\varepsilon}.
  \end{displaymath}
    Now let $\varepsilon\to 0$ to obtain the required result.
\end{proof}

We now give two lemmas which are required for the remaining results.

\begin{lem}\label{lem1}
  Suppose that the conditions of Theorem~\ref{thlb} hold and that
  \begin{equation} \label{prop2}
    \limsup_{y\to\infty} 
    \frac{{\bf P} (M>y)}{\overline{F}^s (y)} \leq \frac{C}{a}.
  \end{equation}
  Then the conclusion~\eqref{prop} follows.
\end{lem}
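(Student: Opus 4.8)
The plan is to derive \eqref{prop} by pairing the lower bound of Theorem~\ref{thlb} with the hypothesised upper bound \eqref{prop2}, exploiting the fact that the events $\{X_{\mu(y)}\in B\}$ and $\{X_{\mu(y)}\in B^c\}$ partition $\{M>y\}$. So I would fix $B\in\mathcal{B}$, set $B^c=\mathcal{X}\setminus B$, and start from the exact identity
\[
{\bf P}(M>y) = {\bf P}(M>y,\,X_{\mu(y)}\in B) + {\bf P}(M>y,\,X_{\mu(y)}\in B^c),
\]
valid for all $y>0$ because $\mu(y)<\infty$ exactly on $\{M>y\}$.

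Before dividing through by $\overline{F}^s(y)$ I would record the harmless preliminaries: $\overline{F}^s(y)>0$ for all $y$ since $F^s$ is long-tailed; $c(x)\le L<\infty$ by \eqref{tail}--\eqref{comp}, so that $C=C(\mathcal{X})<\infty$ and the set function $C(\cdot)=\int_\cdot c(x)\,\pi(dx)$ of \eqref{Cdef} is a finite (hence additive) measure with $C(B)+C(B^c)=C$; and consequently every ratio appearing below is bounded above by $C/a$, so that no indeterminate form $\infty-\infty$ can arise. Then, dividing the identity by $\overline{F}^s(y)$, taking $\limsup_{y\to\infty}$, and using $\limsup(f+g)\ge\limsup f+\liminf g$ together with the Theorem~\ref{thlb} lower bound applied to $B^c$, I obtain
\[
\frac{C}{a}\;\ge\;\limsup_{y\to\infty}\frac{{\bf P}(M>y)}{\overline{F}^s(y)}\;\ge\;\limsup_{y\to\infty}\frac{{\bf P}(M>y,\,X_{\mu(y)}\in B)}{\overline{F}^s(y)}+\frac{C(B^c)}{a},
\]
the first inequality being exactly the hypothesis \eqref{prop2}. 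Rearranging gives $\limsup_{y\to\infty}{\bf P}(M>y,\,X_{\mu(y)}\in B)/\overline{F}^s(y)\le (C-C(B^c))/a=C(B)/a$.

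Finally I would combine this with the matching lower bound $\liminf_{y\to\infty}{\bf P}(M>y,\,X_{\mu(y)}\in B)/\overline{F}^s(y)\ge C(B)/a$ supplied by Theorem~\ref{thlb} to conclude that the limit exists and equals $C(B)/a$, i.e.\ \eqref{prop}. I do not anticipate a genuine obstacle: the argument is essentially a one-line complementation trick, and the only point needing attention is the bookkeeping of the previous paragraph --- finiteness of $C$ and additivity of $C(\cdot)$, both immediate from \eqref{tail}--\eqref{comp} and \eqref{Cdef} --- which legitimises the $\limsup$/$\liminf$ arithmetic.
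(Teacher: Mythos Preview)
Your argument is correct and is essentially identical to the paper's own proof: both start from the decomposition ${\bf P}(M>y)={\bf P}(M>y,X_{\mu(y)}\in B)+{\bf P}(M>y,X_{\mu(y)}\in B^c)$, apply \eqref{prop2} on the left and Theorem~\ref{thlb} to the complement on the right via the inequality $\limsup(f+g)\ge\limsup f+\liminf g$, and then use $C(B)+C(B^c)=C$ to rearrange. Your extra bookkeeping on finiteness of $C$ and positivity of $\overline{F}^s$ is harmless and, if anything, slightly more careful than the paper's presentation.
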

\begin{proof}
From \eqref{prop2}, for any $B\in {\cal X}$,
\begin{align*}
  \frac{C}{a} & \geq \limsup_{y\to\infty} 
  \frac{{\bf P} (M>y)}{\overline{F}^s (y)} \\
  & = \limsup_{y\to\infty} \left(
    \frac{{\bf P} (M>y, X_{\mu (y)} \in B)}{\overline{F}^s (y)}
    + 
    \frac{{\bf P} (M>y, X_{\mu (y)} \in 
      \overline{B})}{\overline{F}^s (y)}
  \right)\\
  & \geq \limsup_{y\to\infty} 
  \frac{{\bf P} (M>y, X_{\mu (y)} \in B)}{\overline{F}^s (y)}
  +
  \liminf_{y\to\infty}
  \frac{{\bf P} (M>y, X_{\mu (y)} \in 
    \overline{B})}{\overline{F}^s (y)} \\
  & \geq \limsup_{y\to\infty}
  \frac{{\bf P} (M>y, X_{\mu (y)} \in B)}{\overline{F}^s (y)} +
  \frac{C(\overline{B})}{a},
\end{align*}
where the last inequality follows by Theorem~\ref{thlb}.
Since $C=C(B)+ C(\overline{B})$, the conclusion \eqref{prop} follows
as required.
\end{proof}

In each of the proofs of Theorems~\ref{thsu}, \ref{thexp} and
\ref{thup} we show that, for all $\varepsilon$ satisfying
$0<\varepsilon<a$, there exists $R>0$ (depending on $\varepsilon$)
such that, if, for each $n=1,2,\ldots$,
\begin{equation}
  \label{ddef}
  D'_n \equiv
  \{ S_j \leq R-j(a-\varepsilon )
  \quad \text{for all $j=1,\ldots , n-1$};
  \ S_{n+i}-S_n \leq R
  \quad \text{for all $i=1,2, \ldots$} \},
\end{equation}
then ${\bf P}(D'_n)>1-\varepsilon$ for all $n$.  In each case we then
require Lemma~\ref{lem2} below to complete the proof.

\begin{lem}\label{lem2}
  Suppose that $F^s$ is subexponential, that there exist a sequence of
  i.i.d.\ random variables~$\{\psi_n\}_{n\ge1}$ and a constant~$L_1$
  such that ${\bf{}E}\psi_1<0$ and
  \begin{equation}\label{l1bdd}
    {\bf{}P}(\psi_1>y)\le{}L_1\overline{F}(y)
  \end{equation}
  for all $y\ge0$, and that $\psi_n$ is independent of $D'_n$ for all
  $n\ge1$.  Suppose further that the condition~(C1) is satisfied and
  that
  \begin{equation}
    \label{mmpsi}
    {\bf P}(M>y) \le {\bf P}(M > y, M^\psi > y) + o(\overline{F}^s(y))
    \quad\text{ as $y\to\infty$},
  \end{equation} 
  where $M^\psi=\max(0,\sup_n\sum_{i=1}^n\psi_i)$.  Then the
  conclusion~\eqref{prop} follows.
\end{lem}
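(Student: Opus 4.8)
The plan is to establish the bound \eqref{prop2} of Lemma~\ref{lem1} and then to invoke that lemma. Since a subexponential distribution is long-tailed, and since (C1) is assumed while (C2) underlies the very construction of the sets $D'_n$ (giving sense to ${\bf P}(D'_n)>1-\varepsilon$ and to the drift~$a$), the remaining hypotheses of Theorem~\ref{thlb}, hence of Lemma~\ref{lem1}, are in force. By \eqref{mmpsi} it suffices to show
\[
  \limsup_{y\to\infty}\frac{{\bf P}(M>y,\,M^\psi>y)}{\overline{F}^s(y)}\le\frac{C}{a},
\]
and I would prove this for each fixed $\varepsilon\in(0,a)$, with $R=R(\varepsilon)$ and the associated family $\{D'_n\}$, ending with a bound of the form $C/(a-\varepsilon)+K\varepsilon$ for a constant $K$ not depending on $\varepsilon$, and then let $\varepsilon\downarrow0$.

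Two preparatory reductions. First, using \eqref{l1bdd}, replace $\{\psi_n\}$ by a stochastically larger i.i.d.\ walk $\{\widehat\psi_n\}$ obtained by enlarging only the far right tail of the law of $\psi_1$: one keeps the mean negative, preserves \eqref{l1bdd}, and, drawing on fresh independent randomness, keeps $\widehat\psi_n$ independent of $D'_n$; the new increment distribution $\widehat G$ then satisfies $\overline{\widehat G}(y)\sim L_1\overline{F}(y)$, $m(\widehat G)<0$, and (by Property~\ref{p:com} together with the preservation of subexponentiality under tail-equivalence) $\overline{\widehat G}^s(y)\sim L_1\overline{F}^s(y)$ with $\widehat G^s$ subexponential. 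Since $M^\psi\le M^{\widehat\psi}:=\max(0,\sup_n\sum_{i\le n}\widehat\psi_i)$, Veraverbeke's theorem gives ${\bf P}(M^{\widehat\psi}>y)=O(\overline{F}^s(y))$, and, fixing $\tilde g\in(0,|m(\widehat G)|)$, Property~\ref{p:vc1} shows that, up to an event of probability $o(\overline{F}^s(y))$, $\{M^\psi>y\}\subseteq\bigcup_{m\ge1}\{\widehat\psi_m>y+(m-1)\tilde g\}$. Second, decompose $\{M>y\}$ by the first-passage index $\mu(y)=n$ and substitute this description; modulo $o(\overline{F}^s(y))$ this bounds ${\bf P}(M>y,M^\psi>y)$ by $\sum_{n,m\ge1}{\bf P}(\mu(y)=n,\,\widehat\psi_m>y+(m-1)\tilde g)$.

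It remains to estimate that double sum. On the diagonal $m=n$: restricting to $D'_n$, on $\{\mu(y)=n\}\cap D'_n$ (for $y>R$) one has $S_{n-1}\le R-(n-1)(a-\varepsilon)$, hence $\xi_n^{X_n}>y-R+(n-1)(a-\varepsilon)$; dropping the other constraints, conditioning on $X$, and arguing exactly as in the proof of Theorem~\ref{thlb} via \eqref{comp}, \eqref{tail}, \eqref{cvge} and \eqref{lt2}, this contributes at most $(C/(a-\varepsilon)+o(1))\overline{F}^s(y)$. Restricting instead to $\overline{D'_n}$, discard $\{\mu(y)=n\}$ and use the independence of $\widehat\psi_n$ from $D'_n$, with ${\bf P}(\overline{D'_n})<\varepsilon$ and $\overline{\widehat G}(y)\le L_1\overline{F}(y)$, to bound the contribution by $\varepsilon L_1\sum_n\overline{F}(y+(n-1)\tilde g)\sim(\varepsilon L_1/\tilde g)\overline{F}^s(y)$. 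On the off-diagonal $m\ne n$: when $m>n$, $\{\mu(y)=n\}$ depends only on coordinates of index $\le n<m$ and is therefore independent of $\widehat\psi_m$, so this sub-sum is at most ${\bf P}(M>y)\sum_m\overline{\widehat G}(y+(m-1)\tilde g)=o(1)\cdot O(\overline{F}^s(y))=o(\overline{F}^s(y))$, since ${\bf P}(M>y)\to0$. The hard case, which I expect to be the principal obstacle, is $m<n$ — the large responsible $\psi$-increment sitting strictly before the first passage of the modulated walk — where the crude estimate yields only $O(\overline{F}^s(y))$. To upgrade this to $o(\overline{F}^s(y))$ I would split $\{M^\psi>y\}$ into $\{M^\psi_n>y\}$ and $\{M^\psi_n\le y,\ \sup_{k>n}\sum_{i\le k}\psi_i>y\}$ (writing $M^\psi_n=\max(0,\max_{k\le n}\sum_{i\le k}\psi_i)$): on the former the $\psi$-walk has crossed $y$ by time $n$, which — via Property~\ref{p:vc1} applied to the $\psi$-walk on $[1,n]$, the independence of the relevant $\psi$-increment from the index-disjoint part of $\{\mu(y)=n\}$, and ${\bf P}(M>y)\to0$ — is $o(\overline{F}^s(y))$; on the latter the post-$n$ piece of the $\psi$-walk is an independent copy of negative drift that must rise by $y-\sum_{i\le n}\psi_i$, so its contribution is dominated by ${\bf P}(M^\psi>y-\sum_{i\le n}\psi_i)$, and one concludes by observing that $\{M^\psi_n\le y,\ \sum_{i\le n}\psi_i\ \text{close to}\ y\}$ is itself negligible (a single $\psi$-increment cannot typically leave the walk just below $y$, again by Property~\ref{p:vc1}). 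Collecting the pieces, $\limsup_{y\to\infty}{\bf P}(M>y)/\overline{F}^s(y)\le C/(a-\varepsilon)+\varepsilon L_1/\tilde g$; letting $\varepsilon\downarrow0$ gives \eqref{prop2}, and Lemma~\ref{lem1} then delivers \eqref{prop}.
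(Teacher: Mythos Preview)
Your decomposition of $\{M>y\}$ by the first-passage index $\mu(y)=n$ creates a double sum over $(n,m)$ whose off-diagonal terms you cannot control with the hypotheses of the lemma. For $m>n$ you assert that $\widehat\psi_m$ is independent of $\{\mu(y)=n\}$ because the latter ``depends only on coordinates of index $\le n$'', but the lemma assumes \emph{only} that $\psi_n$ is independent of $D'_n$ for each $n$; nothing is said about the joint law of $\psi_m$ and the $\xi$-variables at other indices. (Indeed, in the applications $\psi_n$ and $\xi^x_n$ are built from the \emph{same} uniform $\alpha_n$, so the $\psi$-sequence and the $\xi$-walk are heavily dependent; your claim happens to hold there for $m>n$, but it is not a hypothesis of the lemma and cannot be used in its proof.) Your sketch for $m<n$ leans on the same unstated independence (``the independence of the relevant $\psi$-increment from the index-disjoint part of $\{\mu(y)=n\}$'') and is not a proof.

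The paper avoids the double sum altogether by exploiting the part of $D'_n$ you never use, namely the \emph{post-$n$} constraint $S_{n+i}-S_n\le R$ for all $i\ge1$. With this one checks directly that, for $y>R$,
\[
D'_n\cap\{\xi_n^{X_n}\le y-2R+(n-1)(a-\varepsilon)\}\ \subseteq\ \{M\le y\},
\]
since on this event $S_j\le R$ for $j<n$, $S_n=S_{n-1}+\xi_n^{X_n}\le y-R$, and $S_{n+i}\le S_n+R\le y$. Thus $D'_n\cap\{M>y\}$ by itself forces $\xi_n^{X_n}>y-2R+(n-1)(a-\varepsilon)$, with no reference to $\mu(y)$. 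After the Veraverbeke step one therefore splits $\sum_n{\bf P}(M>y,\psi_n>y+ng)$ on $D'_n$ versus $\overline{D'_n}$: on $D'_n$ one simply \emph{drops} the $\psi_n$-condition and bounds by $\sum_n{\bf P}(\xi_n^{X_n}>y-2R+(n-1)(a-\varepsilon))$, which yields $C/(a-\varepsilon)$ exactly as in your diagonal computation; on $\overline{D'_n}$ one uses the single independence hypothesis $\psi_n\perp D'_n$ to get the $\varepsilon L_1/g$ term. There are no off-diagonal terms and no independence beyond what is assumed.
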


\begin{proof}
  As in the proof of Theorem~\ref{thlb}, we assume that the
  constant~$d$ of the condition~(C1) is equal to $1$.  We may further
  assume, without loss of generality, that the condition~\eqref{l1bdd}
  is satisfied with equality for all sufficiently large $y$.  (If this
  is not the case we can use Property~\ref{p:ext} of
  Section~\ref{sec:properties} to replace $\{\psi_n\}_{n\ge1}$ with
  i.i.d.\ sequence $\{\tilde{\psi}_n\}_{n\ge1}$ satisfying all the
  conditions of the lemma and with also the required equality in
  \eqref{l1bdd}.)  It follows that the common distribution of the
  random variables $\psi_n$ has a second-tail distribution which is
  subexponential.  Thus, if $g=-{\bf{}E}(\psi_1)$ (so $g>0$), it
  follows from the conditions of the lemma and Veraverbeke's Theorem
  that
  \begin{align}
    {\bf P}(M>y)
    & \le {\bf P}(M > y, M^\psi > y) + o(\overline{F}^s(y))\nonumber\\
    & = \sum_{n=1}^{\infty} {\bf P} (M>y,\psi_n>y+ng) +
    o(\overline{F}^s(y))\nonumber\\
    & \le \Sigma_1 + \Sigma_2 + o(\overline{F}^s(y)),\label{pause1}
  \end{align}
  where 
  \begin{displaymath}
    \Sigma_1 = \sum_{n=1}^{\infty} {\bf P}(D'_n,M>y),\qquad
    \Sigma_2 = \sum_{n=1}^{\infty}
    {\bf P}(\overline{D}'_n,M>y,\psi_n>y+ng). 
  \end{displaymath}
  Since, for each $n$, $\psi_n$ is independent of $D'_n$, we have,
  using \eqref{lt2},
  \begin{equation}\label{pause2}
    \Sigma_2 \le \sum_n {\bf P} (\overline{D}'_n)
    {\bf P} (\psi_n > y+ng)
    \leq (1+o(1))\frac{\varepsilon L_1}{g}\overline{F}^s(y)
    \quad\text{as $y\to\infty$}.
  \end{equation}
  We now consider $\Sigma_1$. Take $y>R$. For any $n$, the event
  \begin{equation} \label{empty}
    V_n \equiv D'_n \cap 
    \{ \xi_n^{X_n} \leq y-2R + (n-1)(a-\varepsilon )\} \subseteq
    \{ M \leq y \}.
  \end{equation}
  To see this, note that, on the set~$V_n$,
  $S_j \leq R-j(a-\varepsilon )$ for all $j<n$,
  $$
  S_n = S_{n-1}+\xi_n^{X_n} \leq y-R
  $$
  and, for $i=1,2,\ldots$,
  $$
  S_{n+i} = S_n + (S_{n+i}-S_n) \leq y-R+R = y.
  $$

  Thus, from \eqref{empty},
  \begin{align*}
    \Sigma_1
    & \leq \sum_n {\bf P}(\xi_n^{X_n} > y-2R+(n-1)(a-\varepsilon)) \\
    & = \sum_n \int_{\cal X} {\bf P} (X_n\in dx)
    \overline{F}_x(y-2R+(n-1)(a-\varepsilon)) \\
    & \leq \sum_n \int_{\cal X} \pi (dx) 
    \overline{F}_x(y-2R+(n-1)(a-\varepsilon)) 
    + L\sum_n \delta_n \overline{F}(y-2R+ (n-1) (a-\varepsilon )),
  \end{align*}
  where, as in the the proof of Theorem~\ref{thlb}, $\delta_n$ is the
  distance in total variation between the distributions of $X_n$ and
  $\pi$, and so tends to $0$ as $n\to\infty$.  Exactly as in that
  proof, it now follows from \eqref{lt2} and the Bounded Convergence
  Theorem that
  \begin{displaymath}
    \limsup_{y\to\infty}\frac{\Sigma_1}{\overline{F}^s(y)}
    \le \frac{C}{a-\varepsilon}.
  \end{displaymath}
  It now follows, on recalling \eqref{pause1} and
  \eqref{pause2} and letting $\varepsilon\to0$, that the
  condition~\eqref{prop2} of Lemma~\ref{lem1} is satisfied.  The
  required conclusion~\eqref{prop} now follows from that lemma.
\end{proof}

\begin{proof}[Proof of Theorem~\ref{thsu}]
  
  It follows from the condition~\eqref{comp} that, without loss of
  generality, we can assume that $\overline{G}(y) \leq
  L\overline{F}(y)$ for all~$y$.   
  Let $\{\alpha_n\}_{n\ge1}$ be an i.i.d.\ sequence of random
  variables uniformly distributed on $(0,1)$ and independent of
  $X=\{X_n\}$.  Construct the required family of random
  variables~$\{\xi_n^x\}_{n\ge1}$ by defining, for each $n$,
  $\xi_n^x=F^{-1}_x(\alpha_n)$; for each $n$ define also
  $\psi_n=G^{-1}(\alpha_n)$.  Here, for any distribution function~$H$,
  the quantile function~$H^{-1}$ is given by
  $$
  H^{-1}(t) = \sup \{ z: \ H(z) \leq t \}.
  $$
  Note that the pairs~$(\xi_n^x,\psi_n)$, $n\ge1$, are independent
  in $n$, that the sequence $\{\psi_n\}_{n\ge1}$ is i.i.d.\ with
  ${\bf{}E}\psi_1<0$ (from \eqref{su}) and distribution function~$G$,
  and that
  \begin{equation}
    \xi_n^x\leq\psi_n \quad\text{a.s..} \label{xibdd}
  \end{equation}
  Put $S^{\psi}_n =\sum_{j=1}^n \psi_j$ and
  $$
  M^{\psi} = \max(0,\sup_n S^{\psi}_n).
  $$
  From the SLLN for $\{\psi_n\}$ and from the condition (C2), 
  for any $\varepsilon >0$, there exists $R>0$ such that, for any
  $n=1,2,\ldots$,
  \begin{displaymath}
    {\bf P} (S_j < R-j(a-\varepsilon),\ j=1,2,\ldots,n-1;\ %
    S^{\psi}_{n+i} - S^{\psi}_n < R, \ i=1,2,\ldots ) > 1-\varepsilon.
  \end{displaymath}
  Hence, from \eqref{xibdd}, ${\bf P}(D'_n)>1-\varepsilon$ for all
  $n$, where each $D'_n$ is as given by \eqref{ddef}.  Also from
  \eqref{xibdd},
  \begin{displaymath}
    {\bf P} (M>y) = {\bf P}(M>y, M^{\psi} > y).
  \end{displaymath}
  It is now easy to check that all the conditions of Lemma~\ref{lem2}
  are satisfied, with each $\psi_n$ and $D'_n$ as given here, and the
  required result now follows from that lemma.
\end{proof}

The following further two lemmas are also required in each of the
proofs of Theorems~\ref{thexp} and \ref{thup} (where in each case $X$
is regenerative).

\begin{lem}\label{lem3}
  Suppose that $X$ is regenerative with ${\bf E}\tau<\infty$ and also
  that $F^s$ is subexponential.  Let $\{\{\eta^x_n\}_{x\in\cal
    X}\}_{n\ge1}$ be a sequence of families of random variables such
  that these families are independent and identically distributed in
  $n$
  and are further independent of $X$.  Suppose further that there
  exists a constant~$b>0$ satisfying the condition~\eqref{tautail} and
  such that
  \begin{equation}
    \label{hbd}
    \eta^x_1 \le b \quad\text{a.s. \qquad for all $x$},
  \end{equation}
  and that
  \begin{equation}
    \label{hdrift}
    \int_{\cal X} {\bf E}\eta^x_1\,\pi(dx) < 0.
  \end{equation}
  Define
  \begin{displaymath}
    M^\eta = \max\left(0,\sup_n \sum_{i=1}^n \eta^{X_i}_i\right).
  \end{displaymath}
  Then
  \begin{displaymath}
    {\bf P}(M^\eta>y) = o(\overline{F}^s(y))
    \qquad\text{as $y\to\infty$}.
  \end{displaymath}
\end{lem}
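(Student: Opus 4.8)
The plan is to exploit the regenerative structure of $X$ to break the modulated walk $N_n:=\sum_{i=1}^n\eta^{X_i}_i$ into its cycles. Observing $N$ only at the instants $T_0,T_1,\dots$ produces an ordinary i.i.d.\ random walk — the ``skeleton'' walk — whose increments inherit, via the crude bound $\eta^x_1\le b$, a tail that is $o(\overline F)$, so that its running maximum is handled by Veraverbeke's Theorem (through Property~\ref{p:vc2}); the within-cycle excursions and the $0$th cycle are then controlled directly by $\eta^x_1\le b$ together with the hypothesis~\eqref{tautail} on the cycle lengths.

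Concretely, for $k\ge1$ set $Y_k=\sum_{i=T_{k-1}+1}^{T_k}\eta^{X_i}_i$ and $Y_0=\sum_{i=1}^{T_0}\eta^{X_i}_i$; since the families $\{\eta^x_n\}_x$ are i.i.d.\ in $n$ and independent of $X$, and $X$ is regenerative with ${\bf E}\tau<\infty$, the $Y_k$ ($k\ge1$) are i.i.d., independent of $Y_0$, with ${\bf E}|Y_1|<\infty$ and ${\bf E}Y_1<0$ (a positive multiple of $\int_{\cal X}{\bf E}\eta^x_1\,\pi(dx)$, which is negative by~\eqref{hdrift}). Put $V_k=Y_1+\cdots+Y_k$ ($V_0=0$), $R_k=\max_{0\le i\le\tau_k}\sum_{l=T_{k-1}+1}^{T_{k-1}+i}\eta^{X_l}_l$ for $k\ge1$, and $R_0=\max_{0\le i\le T_0}\sum_{l=1}^{i}\eta^{X_l}_l$. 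Since $\eta^x_1\le b$, $0\le R_k\le b\tau_k$ for all $k\ge0$ (with $\tau_0=T_0$); in particular ${\bf P}(Y_1>y)\le{\bf P}(b\tau>y)=o(\overline F(y))$ by~\eqref{tautail}. A short calculation, splitting an arbitrary $n$ into the cycle containing it and using $Y_0\le R_0$, gives
\begin{displaymath}
  M^\eta\ \le\ R_0+\sup_{k\ge1}(V_{k-1}+R_k),
\end{displaymath}
where $R_0$ depends only on the $0$th cycle while $\sup_{k\ge1}(V_{k-1}+R_k)$ depends only on the cycles $1,2,\dots$, so the two terms are independent.

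By~\eqref{tautail}, ${\bf P}(R_0>y)\le{\bf P}(b\tau_0>y)=o(\overline F^s(y))$. The crux is then to prove
\begin{equation}\label{crux}
  {\bf P}\Bigl(\sup_{k\ge1}(V_{k-1}+R_k)>y\Bigr)=o(\overline F^s(y)),
\end{equation}
for then $M^\eta$ is dominated by a sum of two independent random variables each with tail $o(\overline F^s)$, and, $F^s$ being subexponential, Property~\ref{p:conv} (both constants equal to $0$) gives ${\bf P}(M^\eta>y)=o(\overline F^s(y))$, the assertion of the lemma. For~\eqref{crux} I would bound $R_k\le b\tau_k$ and study $\sup_{k\ge1}(V_{k-1}+b\tau_k)$. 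Veraverbeke's Theorem / Property~\ref{p:vc2}, applied to the i.i.d.\ walk $\{V_k\}$, gives ${\bf P}(\sup_kV_k>y)=o(\overline F^s(y))$; each individual bump satisfies ${\bf P}(b\tau_k>y)=o(\overline F(y))=o(\overline F^s(y))$ by~\eqref{lt1}; and, by Property~\ref{p:com} (with $c=0$), $\overline{(b\tau)^s}(y)=\int_y^\infty{\bf P}(b\tau>t)\,dt=o(\overline F^s(y))$. The difficulty is that ${\bf E}\tau^2$ is not assumed finite, so the series $\sum_k{\bf P}(V_{k-1}+b\tau_k>y)$ may diverge and cannot serve as an upper bound; nor can one split the exceedance event at a threshold of the form $\gamma y$, since a subexponential $F^s$ need not behave well under rescaling. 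Instead I would fix a large constant $z_0$ and split according to whether the bump responsible for the exceedance is at most $z_0$ — in which case $\sup_kV_k>y-z_0$, of probability $o(\overline F^s(y-z_0))=o(\overline F^s(y))$ since $F^s$ is long-tailed — or exceeds $z_0$. In the latter case one restricts attention to the ``large'' cycles $\{k:b\tau_k>z_0\}$, which form a thinned i.i.d.\ subsequence, independent of the collection of ``small'' cycles and of the large/small pattern; choosing $z_0$ large enough that ${\bf E}[b\tau\,{\bf I}(b\tau>z_0)]$ is much smaller than $g:=-{\bf E}Y_1$, the downward drift that $V$ accumulates over the many intervening small cycles outweighs the upward contributions of the large cycles themselves, so that the $j$th large bump can push the walk above level~$y$ only if it exceeds $y+\delta'j$ for some $\delta'=\delta'(z_0)>0$ (up to an $o(\overline F^s)$‑tailed error independent of the large bumps). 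Summing over $j$ and using that the large bumps have distribution ${\bf P}(b\tau>\,\cdot\,)/{\bf P}(b\tau>z_0)$ on $[z_0,\infty)$, this contributes at most a fixed constant times $\overline{(b\tau)^s}(y-\mathrm{const})$, which is $o(\overline F^s(y))$ by the remarks above.

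The step I expect to be the main obstacle is precisely this last one: the cycle lengths are required to be negligible relative to $\overline F$ (resp.\ $\overline F^s$ for $\tau_0$) only at the level of a single cycle, which is too weak for a crude union bound over all cycles, and the thinning‑into‑large‑and‑small‑cycles device — together with the long‑tailedness of $F^s$ for absorbing constant shifts and its subexponentiality for combining independent negligible terms (Property~\ref{p:conv}) — is what turns single‑cycle negligibility into negligibility of the whole supremum. The remaining ingredients (the cycle decomposition, the reduction to the skeleton walk, and the invocation of Veraverbeke's Theorem) are routine.
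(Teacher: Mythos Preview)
Your cycle decomposition and the bound $M^\eta\le b\tau_0+\sup_{k\ge1}(V_{k-1}+b\tau_k)$ coincide with the paper's (it writes $\beta_n$ for your $Y_n$).  Where you diverge is precisely at the step you flag as the obstacle, and there the paper has a one-line device that you are missing.

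Instead of thinning into large and small cycles, the paper picks $K$ large enough that $\gamma_n:=\max(Y_n,\,b\tau_n-K)$ still satisfies ${\bf E}\gamma_1<0$ (possible because ${\bf E}\tau<\infty$, so ${\bf E}\max(Y_1,b\tau_1-K)\downarrow{\bf E}Y_1<0$ as $K\uparrow\infty$).  Since $\gamma_n\ge Y_n$ and $b\tau_n\le\gamma_n+K$,
\[
V_{k-1}+b\tau_k \;\le\; \gamma_1+\cdots+\gamma_k+K,
\]
so $\sup_{k\ge1}(V_{k-1}+b\tau_k)\le K+M^\gamma$ with $M^\gamma:=\max\bigl(0,\sup_k\sum_{i=1}^k\gamma_i\bigr)$.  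But $\gamma_n\le b\tau_n$ gives ${\bf P}(\gamma_1>y)\le{\bf P}(b\tau>y)=o(\overline F(y))$, and a single application of Property~\ref{p:vc2} yields ${\bf P}(M^\gamma>y)=o(\overline F^s(y))$.  The bump $b\tau_k$ has been absorbed into the $k$th increment of an honest i.i.d.\ walk at the fixed cost~$K$, so no separate walk-plus-bump analysis is needed at all.

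Your thinning argument is a reasonable heuristic but, as presented, only a sketch.  The assertion that ``the $j$th large bump can push the walk above $y$ only if it exceeds $y+\delta'j$, up to an $o(\overline F^s)$-tailed error'' is not established: you would need uniform-in-$j$ control of $V_{k_j-1}$, and that sum contains the earlier large-cycle increments $Y_{k_1},\dots,Y_{k_{j-1}}$, which are correlated with the corresponding $\tau_{k_l}$ and may themselves be heavy-tailed.  This can probably be pushed through, but it is genuine extra work that the $\gamma_n$ trick renders unnecessary.
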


\begin{proof}
  Define
  \begin{displaymath}
    \beta_n = \sum_{i=T_{n-1}+1}^{T_n}\eta^{X_i}_i, \qquad n\ge 1.
  \end{displaymath}
  Observe that $\{\beta_n\}_{n\ge1}$ is an i.i.d.\ sequence with, from
  \eqref{hbd} and \eqref{hdrift},
  \begin{displaymath}
    {\bf E}\beta_1 < 0, \qquad\qquad
    \beta_n \le b\tau_n, \quad n \ge 1.
  \end{displaymath}
  Since also $X$ is regenerative with ${\bf E}(\tau)<\infty$, we can
  choose $K>0$ sufficiently large that if
  \begin{equation}
    \label{gammadef}
    \gamma_n = \max(\beta_n, b\tau_n - K), \qquad n \ge 1,
  \end{equation}
  then $\{\gamma_n\}_{n\ge1}$ is an i.i.d.\ sequence with
  \begin{equation}
    \label{gammaprop}
    {\bf E}\gamma_1 < 0, \qquad\qquad
    \gamma_n \le b\tau_n, \quad n \ge 1.
  \end{equation}
  Define also
  \begin{displaymath}
    M^\gamma = \max\left(0, \sup_{n\ge1}\sum_{i=1}^n \gamma_i\right).
  \end{displaymath}
  It follows from \eqref{gammaprop}, the assumed
  condition~\eqref{tautail} (for $b$ as given) and the extension of
  Veraverbeke's Theorem given by Property~\ref{p:vc2} of
  Section~\ref{sec:properties}, that
  \begin{equation}
    \label{mgtail}
    {\bf P}(M^\gamma>y) = o(\overline{F}^s(y)),
    \quad\text{as $y\to\infty$}.
  \end{equation}
  Now
  \begin{align*}
    M^\eta
    & \le b\tau_0
    + \sup(b\tau_1, \beta_1+b\tau_2, \beta_1+\beta_2+b\tau_3,\ldots)\\
    & \le b\tau_0 + K
    + \sup(\gamma_1, \gamma_1+\gamma_2, \gamma_1+\gamma_2+\gamma_3,\ldots)\\
    & \le  b\tau_0 + K + M^\gamma,
  \end{align*}
  where the second inequality above follows from \eqref{gammadef}.
  Further $\tau_0$ and $M^\gamma$ are independent.  The required
  result now follows from \eqref{mgtail}, the assumed
  condition~\eqref{tautail} and Property~\ref{p:conv} of
  Section~\ref{sec:properties}.
\end{proof}
 
The following lemma combines the results of Lemmas~\ref{lem2} and
\ref{lem3} to provide a set of conditions for the regenerative case
under which there follows the desired conclusion~\eqref{prop} of both
Theorems~\ref{thexp} and \ref{thup}.

\begin{lem}\label{lem4}
  Suppose that $X$ is regenerative with ${\bf E}\tau<\infty$ and also
  that $F^s$ is subexponential.  Suppose also that there exist a
  sequence of i.i.d.\ random variables~$\{\psi_n\}_{n\ge1}$ and a
  constant~$L_1$ satisfying the conditions of Lemma~\ref{lem2}, i.e.\ %
  that
  \begin{equation}\label{psicond1}
    {\bf{}E}(\psi_1)<0, \qquad
    {\bf{}P}(\psi_1>y)\le{}L_1\overline{F}(y) \quad\text{for all $y\ge0$,}
  \end{equation}
  and that
  \begin{equation}
    \label{psicond2}
    \text{$\psi_n$ is independent of $D'_n$ for all $n\ge1$}
  \end{equation}
  (where $D'_n$ is as given by \eqref{ddef}).  Suppose further that
  there exists a sequence of families of random
  variables~$\{\{\eta^x_n\}_{x\in\cal{}X}\}_{n\ge1}$ and a
  constant~$b>0$ satisfying all the conditions of Lemma~\ref{lem3},
  and that
  \begin{equation}
    \label{xipsieta}
    \xi^x_n \le \psi_n + \eta^x_n, \qquad x\in{\cal X}, \quad n \ge 1.
  \end{equation}
  Again define
  \begin{displaymath}
    M^\psi = \max\left(0,\sup_n \sum_{i=1}^n \psi_i\right), \qquad
    M^\eta = \max\left(0,\sup_n \sum_{i=1}^n \eta^{X_i}_i\right).
  \end{displaymath}
  Finally, suppose that $M^\psi$ and $M^\eta$ are independent.  Then
  the conclusion~\eqref{prop} follows.
\end{lem}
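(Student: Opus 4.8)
The plan is to deduce~\eqref{prop} directly from Lemma~\ref{lem2}, applied with the given i.i.d.\ sequence $\{\psi_n\}_{n\ge1}$ and constant $L_1$; so I need to check all the hypotheses of that lemma.  The subexponentiality of $F^s$ and the conditions~\eqref{psicond1}, \eqref{psicond2} are assumed here; the condition~(C1) holds automatically since $X$ is regenerative with ${\bf E}\tau<\infty$; and the events $D'_n$ of~\eqref{ddef} are, as always, taken with $R=R(\varepsilon)$ chosen so that ${\bf P}(D'_n)>1-\varepsilon$ for all $n$.  Thus the substantive point is to verify the condition~\eqref{mmpsi}, i.e.\ that
$$
{\bf P}(M>y,\,M^\psi\le y)=o(\overline{F}^s(y))\qquad\text{as }y\to\infty.
$$

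The elementary starting point is that, by~\eqref{xipsieta} taken with $x=X_i$, we have $S_n=\sum_{i=1}^n\xi^{X_i}_i\le\sum_{i=1}^n\psi_i+\sum_{i=1}^n\eta^{X_i}_i$ for every $n\ge1$; taking the supremum over $n\ge0$ (using $S_0=0$ and $M^\psi,M^\eta\ge0$) gives $M\le M^\psi+M^\eta$.  Hence $\{M>y\}\subseteq\{M^\psi+M^\eta>y\}$, and, since $M^\eta\ge0$,
\begin{align*}
  {\bf P}(M>y,\,M^\psi\le y)
  &\le {\bf P}(M^\psi+M^\eta>y,\,M^\psi\le y)\\
  &= {\bf P}(M^\psi+M^\eta>y)-{\bf P}(M^\psi>y).
\end{align*}
It therefore suffices to show ${\bf P}(M^\psi+M^\eta>y)\sim{\bf P}(M^\psi>y)$: one shows $M^\eta$ contributes nothing to the tail while $M^\psi$ has a genuine subexponential second-tail asymptotic, and then adds the two using Property~\ref{p:conv}.

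For $M^\eta$, the family $\{\{\eta^x_n\}\}$ and the constant $b$ satisfy all the hypotheses of Lemma~\ref{lem3}, which therefore gives ${\bf P}(M^\eta>y)=o(\overline{F}^s(y))$, i.e.\ ${\bf P}(M^\eta>y)\sim 0\cdot\overline{F}^s(y)$.  For $M^\psi$, I would first reduce, exactly as in the proof of Lemma~\ref{lem2} (using Property~\ref{p:ext}, and enlarging the probability space by independent auxiliary randomness if necessary so as to preserve~\eqref{psicond1}, \eqref{psicond2}, the inequality~\eqref{xipsieta}---with the $\psi_n$ replaced by the new, larger, variables, so that still $M\le M^\psi+M^\eta$---and the independence of $M^\psi$ and $M^\eta$), to the case in which~\eqref{l1bdd} holds with equality for all sufficiently large $y$.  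In this case the common distribution $G_\psi$ of the $\psi_n$ satisfies $\overline{G}_\psi^s(y)\sim L_1\overline{F}^s(y)$, which is subexponential because $F^s$ is, so Veraverbeke's Theorem (in the form recalled in Section~\ref{sec:properties}) gives ${\bf P}(M^\psi>y)\sim(L_1/g)\overline{F}^s(y)$, where $g=-{\bf E}\psi_1>0$.  Since $M^\psi$ and $M^\eta$ are independent and $F^s$ is subexponential, Property~\ref{p:conv} (with $n=2$, $G=F^s$, $c_1=L_1/g$, $c_2=0$) now yields ${\bf P}(M^\psi+M^\eta>y)\sim(L_1/g)\overline{F}^s(y)\sim{\bf P}(M^\psi>y)$, establishing~\eqref{mmpsi}.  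All the hypotheses of Lemma~\ref{lem2} then hold, and~\eqref{prop} follows.

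The step that needs genuine care is the reduction to equality in~\eqref{l1bdd}: inflating the upper tail of $\psi_1$ to exactly $L_1\overline{F}$ is precisely what makes $G_\psi^s$ subexponential (and hence what allows Property~\ref{p:conv} to be applied to $M^\psi+M^\eta$), but one must do it without destroying ${\bf E}\psi_1<0$, the domination~\eqref{xipsieta}, or the independence relations among $\psi_n$, $D'_n$, $M^\psi$ and $M^\eta$; this is carried out just as in the proof of Lemma~\ref{lem2}.  Everything else is a routine assembly of Lemma~\ref{lem3}, Veraverbeke's Theorem, Property~\ref{p:conv} and Lemma~\ref{lem2}.
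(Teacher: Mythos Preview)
Your proof is correct and follows essentially the same route as the paper's: bound $M\le M^\psi+M^\eta$ from~\eqref{xipsieta}, apply Lemma~\ref{lem3} for the tail of $M^\eta$ and (after the same reduction via Property~\ref{p:ext}) Veraverbeke's Theorem for the tail of $M^\psi$, combine them through Property~\ref{p:conv} and the assumed independence to obtain~\eqref{mmpsi}, and then invoke Lemma~\ref{lem2}.  (Incidentally, your constant $L_1/g$ for ${\bf P}(M^\psi>y)$ is the correct one---the paper's $L_1$ omits the factor $1/g$---but the exact value plays no role in the argument.)
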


\begin{proof}
  As in the proof of Lemma~\ref{lem2} we may assume, without loss of
  generality, that $ {\bf{}P}(\psi_1>y)=L_1\overline{F}(y)$ for all
  sufficiently large $y$.  It then follows from the conditions on the
  sequence~$\{\psi_n\}$ and Veraverbeke's Theorem that
  \begin{equation}
    \label{mpsitail}
    {\bf P}(M^\psi>y) \sim L_1\overline{F}^s(y),
    \quad\text{as $y\to\infty$},
  \end{equation}
  while it follows from Lemma~\ref{lem3} that
  \begin{equation}\label{metatail}
    {\bf P}(M^\eta>y) = o(\overline{F}^s(y)),
    \quad\text{as $y\to\infty$}.
  \end{equation}
  From the condition~\eqref{xipsieta} we have that
  \begin{equation}
    \label{mxibdd}
    M \le M^\psi + M^\eta.
  \end{equation}
  Since also $M^\psi$ and $M^\eta$ are independent, it now follows
  from \eqref{mpsitail}, \eqref{metatail}, \eqref{mxibdd} and
  Property~\ref{p:conv} of Section~\ref{sec:properties} that
  \begin{equation}
    \label{mmpsi2}
    {\bf P}(M>y) = {\bf P}(M > y, M^\psi > y) + o(\overline{F}^s(y))
    \quad\text{ as $y\to\infty$}.
  \end{equation} 
  Finally, since $X$ is regenerative, the condition~(C1), and so now
  all the conditions of Lemma~\ref{lem2}, are satisfied and so the
  required conclusion~\eqref{prop} again follows from that lemma.
\end{proof}

\begin{proof}[Proof of Theorem~\ref{thexp}]
  We construct the sequences~$\{\psi_n\}_{n\ge1}$ and
  $\{\{\eta^x_n\}_{x\in\cal{}X}\}_{n\ge1}$ and the constants~$L_1$ and
  $b$ such that all the conditions of Lemma~\ref{lem4} are satisfied.
  
  It follows from \eqref{comp} that we can find a distribution
  function~$G$ on $\R$ such that
  \begin{equation}\label{sw}
    \overline{F}_x(y) \le \overline{G}(y)\le L\overline{F}(y),
  \end{equation}
  for all~$y$ and for all $x\in\cal X$.  As in the proof of
  Theorem~\ref{thsu}, let $\{\alpha_n\}_{n\ge1}$ be an i.i.d.\ %
  sequence of random variables uniformly distributed on $(0,1)$ and
  independent of $X=\{X_n\}$.  Again construct the required family of
  random variables~$\{\xi_n^x\}_{n\ge1}$ by defining, for each $n$,
  $\xi_n^x=F^{-1}_x(\alpha_n)$; for each $n$ define also
  $\zeta_n=G^{-1}(\alpha_n)$.  Then the pairs~$(\xi_n^x,\zeta_n)$,
  $n\ge1$, are independent in $n$, the sequence $\{\zeta_n\}_{n\ge1}$
  is i.i.d., and
  \begin{equation}
    \label{xibdd2}
    \xi_n^{X_n} \le \zeta_n \quad\text{a.s., \qquad for all $n$.} 
  \end{equation}
  For $y_0>0$, define 
  \begin{displaymath}
    u(y_0)={\bf E}[{\bf I}(\zeta_1>y_0)\zeta_1], \qquad
    v(y_0)=-\int_{\cal X}{\bf E}[{\bf I}(\zeta_1\le{}y_0)\xi_1^x]\pi(dx).
  \end{displaymath}
  Observe that $u(y_0)\to0$ as $y_0\to\infty$ and, by the
  conditions~\eqref{absrdrift} and \eqref{rdrift}, $v(y_0)\to{}a$ as
  $y_0\to\infty$.  Choose $y_0$ sufficiently large and $K>0$ such that
  \begin{equation}\label{uv}
    u(y_0)<{\bf P}(\zeta_1>y_0)K<v(y_0).   
  \end{equation}
  We define the required i.i.d.\ sequence~$\{\psi_n\}_{n\ge1}$ by
  \begin{displaymath}
    \psi_n = {\bf I}(\zeta_n>y_0)(\zeta_n-K).
  \end{displaymath}
  It follows from the construction of this sequence, and in particular
  from \eqref{sw}, \eqref{uv} and the definition of $u(y_0)$, that it
  satisfies the conditions~\eqref{psicond1} and \eqref{psicond2} of
  Lemma~\ref{lem4} with $L_1=L$.  Define also, for each $n$ and for
  each $x$,
  \begin{displaymath}
    \eta^x_n = {\bf I}(\zeta_n\le{}y_0) \xi^x_n + {\bf I}(\zeta_n>y_0) K,
  \end{displaymath}
  The random variables $\eta^x_n$ are bounded above by $b=\max(y_0,K)$.
  Further, by \eqref{uv} and the definition of $v(y_0)$,
  \begin{displaymath}
    \int_{\cal X} {\bf E}\eta^x_1\, \pi(dx) < 0.
  \end{displaymath}
  It now follows, using also the condition~\eqref{tautail} of the
  theorem, that the sequence~$\{\{\eta^x_n\}_{x\in\cal{}X}\}_{n\ge1}$
  and $b$ as given above satisfy the conditions of Lemma~\ref{lem3},
  and so also of Lemma~\ref{lem4}.
  
  The condition~\eqref{xipsieta} follows on observing that, from
  \eqref{xibdd2}, for all $x$ and for all $n$,
  \begin{align*}
    \xi^x_n
    & = {\bf I}(\zeta_n>y_0)(\xi^x_n - K)
    + {\bf I}(\zeta_n\le{}y_0) \xi^x_n + {\bf I}(\zeta_n>y_0) K\\
    & \le \psi_n + \eta^x_n.
  \end{align*}
  Finally, it is not difficult to see that the random variables
  $M^\psi$ and $M^\eta$ (defined as in the statement of
  Lemma~\ref{lem4}) are independent (although the sequences
  $\{\psi_n\}$ and $\{\eta^{X_n}_n\}$ of which they are the maxima are
  \emph{not} independent!).  The required conclusion~\eqref{prop} now
  follows from Lemma~\ref{lem4}.
\end{proof}

\begin{proof}[Proof of Theorem~\ref{thup}]
  We again use Lemma~\ref{lem4}.  It follows from the conditions of
  the theorem that we may take $b$ such that
  \begin{equation}\label{sandwich}
    {\bf E}\zeta < b < {\bf E}_{\pi}b^{X}
  \end{equation}
  and satisfying \eqref{tautail}.  It follows also from \eqref{zbd}
  that there exists $L_1>0$ such that
  \begin{equation}\label{zbd2}
    {\bf{}P}(\zeta>y)\le{}L_1\overline{F}(y)
  \end{equation}
  for all $y$.  Further, we may define random variables
  $\{\xi^x\}_{x\in {\cal X}}$, $\zeta$, and $\{b^x\}_{x\in{\cal X}}$
  in such a way that $\zeta$ and the family~$\{b^x\}_{x\in {\cal X}}$
  are independent and, for all $x$,
  \begin{equation}\label{xzb}
    \xi^x \leq \zeta - b^x \quad \mbox{a.s.}.
  \end{equation}
  For $n=1,2,\ldots$, let $\{ \xi^x_n, b^x_n, \zeta_n \}$ be
  i.i.d.\ copies of $\{ \xi^x, b^x, \zeta \}$, such that these
  sequences are jointly independent of the process~$X$. Define, for
  all~$n$,
  \begin{equation}\label{pedef}
    \psi_n = \zeta_n-b, \qquad\qquad
    \eta^x_n = b - b^x_n,  \quad x \in \cal X.
  \end{equation}
  Then it is easy to check, from \eqref{sandwich}--\eqref{pedef} and the
  condition~\eqref{tautail} and the independence assumption of the
  theorem, that all the conditions of Lemma~\ref{lem4} are satisfied.
  The sequences~$\{\psi_n\}_{n\ge1}$ and
  $\{\{\eta^x_n\}_{x\in\cal{}X}\}_{n\ge1}$ and the constants~$L_1$ and
  $b$ of that lemma are as given here.  We thus have the required
  result.
\end{proof}

\begin{proof}[Proof of Theorem~\ref{thw}]
  We again give the proof in the case $d=1$.  It follows
  straightforwardly from the regenerative structure of $X$, the
  condition~${\bf E}\tau<\infty$, and the condition~(C2) that the
  random vectors
  \begin{align*}
    Y_0 & = \{\tau_0 ; W_1, \ldots , W_{T_0} \},\\
    Y_n & = \{\tau_n ; W_{T_{n-1}+1}, \ldots , W_{T_n} \}, \qquad n\ge1,
  \end{align*}
  form a Harris ergodic Markov chain (see, for example, \cite{HT}).
  Then, since $d=1$, it is again straightforward that $W_n$ converges
  in the total variation norm to a distribution on $\Rp$ which is
  independent of that of $Y_0$.  Now let
  $\tilde{X}=\{\tilde{X}_n\}_{-\infty<n<\infty}$ be the corresponding
  stationary version of the process~$X$ indexed over the entire set of
  integers, and similarly extend the i.i.d.\ sequence of
  families~$\{\{\xi^x_n\}_{x\in\cal X}\}_{n\ge1}$ to
  $\{\{\xi^x_n\}_{x\in\cal X}\}_{-\infty<n<\infty}$.  Let
  $\{\tilde{W}_n\}_{n\ge0}$ (with $\tilde{W}_0\equiv0$ as usual) be
  the corresponding version of the process~$\{W_n\}_{n\ge0}$.  It
  follows from the recursion~\eqref{wrec} that
  \begin{displaymath}
    \tilde{W}_n =
    \max\left(0,\xi^{\tilde{X}_n}_n,
      \xi^{\tilde{X}_n}_n+\xi^{\tilde{X}_{n-1}}_{n-1},
      \dots, \xi^{\tilde{X}_n}_n+\dots+\xi^{\tilde{X}_1}_1\right)
  \end{displaymath}
  which, by stationarity, has the same distribution as 
  \begin{displaymath}
    \max\left(0,\xi^{\tilde{X}_{-1}}_{-1},
      \xi^{\tilde{X}_{-1}}_{-1}+\xi^{\tilde{X}_{-2}}_{-2},
      \dots, \xi^{\tilde{X}_{-1}}_{-1}+\dots+\xi^{\tilde{X}_{-n}}_{-n}\right).
  \end{displaymath}
  Thus, for any $y$, $\lim_{n\to\infty}{\bf P}(W_n>y)$ and
  $\lim_{n\to\infty}{\bf P}(\tilde{W}_n>y)$ both exist and are equal
  to ${\bf P}(M^->y)$ where
  \begin{displaymath}
    M^- =
    \sup\left(0,\xi^{\tilde{X}_{-1}}_{-1},
      \xi^{\tilde{X}_{-1}}_{-1}+\xi^{\tilde{X}_{-2}}_{-2},
      \dots\right).
   \end{displaymath}
   The required result now follows from the application of
   Theorem~\ref{thsu}, \ref{thexp} or \ref{thup} as appropriate, in
   each case with $B=\cal X$, to the time-reversed version of the
   stationary process~$\{\tilde{X}_n,\xi^{\tilde{X}_n}_n\}$.  However,
   under the conditions of Theorem~\ref{thexp} or Theorem~\ref{thup},
   we must also verify the required condition on $\tau^-_0$, defined
   to be the time of the first regeneration at or after time~$0$ in
   the reversed process $X^-=\{X^-_n\}_{n\ge0}$ given by
   $X^-_n=\tilde{X}_{-n}$.  Standard renewal theory shows that the
   distribution of $\tau^-_0$ is given by
   \begin{displaymath}
     {\bf P}(\tau^-_0\ge n)
     = \frac{1}{{\bf E}(\tau)} \sum_{k=n+1}^\infty{\bf P}(\tau\ge k),
     \qquad n = 0,1,\dots
   \end{displaymath}
   An easy calculation, analogous to that of the derivation of
   Property~\ref{p:com} of Section~\ref{sec:properties}, now gives
   that, if $b>0$ is such that ${\bf{}P}(b\tau>y)=o(\overline{F}(y))$
   as $y\to\infty$, then ${\bf{}P}(b\tau^-_0>y)=o(\overline{F}^s(y))$
   as $y\to\infty$.  Thus, in each case, the required condition on
   $\tau^-_0$ follows from the assumed condition on $\tau$.

   The modifications for the case of general~$d$ are again routine.
\end{proof}



\newpage

\end{document}